\theoremstyle{plain}%
 \newtheorem{theorem}{Theorem}
\theoremstyle{remark}
\theoremstyle{definition}
\newtheorem{example}{Example}
\begin{document}

\begin{center}
{\large Hypergeometric accelerations with shifted indices}

  \  

{\textsc{John M. Campbell} } 

  \  

{\footnotesize Department of Mathematics and Statistics}

{\footnotesize Dalhousie University}

{\footnotesize Halifax,    Nova   Scotia,  Canada}

  \

{\footnotesize {\tt jmaxwellcampbell@gmail.com}}

  \   

{\footnotesize ORCID: 0000-0001-5550-2938}

 \ 

\end{center}

\begin{abstract}
 Chu and Zhang, in 2014, introduced hypergeometric transforms derived through the application of an Abel-type summation lemma to Dougall's 
 ${}_{5}H_{5}$-series. These transforms were applied by Chu and Zhang to obtain accelerated rates of convergence, yielding rational series related to the 
 work of Ramanujan and Guillera. We apply a variant of an acceleration method due to Wilf using what we refer to as \emph{shifted indices} for 
 Pochhammer symbols involved in our first-order, inhomogeneous recurrences derived via Zeilberger's algorithm, to build upon Chu and Zhang's 
 accelerations, recovering many of their accelerated series and introducing many inequivalent series for universal constants, including series of Ramanujan 
 type involving linear polynomials as summand factors, as in Ramanujan's series for $\frac{1}{\pi}$. 
\end{abstract}

\noindent {\footnotesize \emph{MSC:} 33F10, 33C20}

\noindent {\footnotesize \emph{Keywords:} series acceleration, Zeilberger's algorithm, difference equation, $\pi$ formula}

\section{Introduction}
 An important contribution in the development of techniques for efficiently computing universal constants such as $\pi$ is due to Chu and Zhang in 2014 
 \cite{ChuZhang2014} and is related to many subsequent applications concerning special functions, number theory, and difference equations. As in the 
 work of Chu and Zhang \cite{ChuZhang2014}, we highlight especially groundbreaking discoveries due to Ramanujan \cite{Ramanujan1914}, the Borwein 
 brothers \cite{BorweinBorwein1987}, and the Chudnovsky brothers \cite{ChudnovskyChudnovsky1988} in the history of $\pi$ formulas, with a number 
 of Ramanujan's famous formulas for $\frac{1}{\pi}$ having been proved as special cases of acceleration identities introduced by Chu and Zhang 
 \cite{ChuZhang2014}. The series accelerations introduced by Chu and Zhang were derived via the modified Abel lemma on summation by parts 
 applied with Dougall's ${}_{5}H_{5}$-series, and many further summation techniques have subsequently been introduced by Chu 
 \cite{Chu2023,Chu2018,Chu2020Alternating,Chu2021Further,Chu2020Infinite,Chu2021Infinite,Chu2021Ramanujan,Chu2019} to produce fast 
 converving series for fundamental constants. The computer proof-based machinery associated with \emph{Wilf--Zeilberger (WZ) theory} 
 \cite{PetkovsekWilfZeilberger1996} is not involved in these past references but provides a powerful tool in the experimental discovery of series related 
 to Ramanujan's work, as in the groundbreaking discoveries due to Guillera 
 \cite{Guillera2018Dougall,Guillera2006,Guillera2008,Guillera2013,Guillera2011,Guillera2010,Guillera2018Proofs,Guillera2002}. This leads us to consider how 
 discrete difference equations-based series accelerations derived using tools associated with WZ theory, as in our recent work 
 \cite{CampbellLevrieunpublished}, could be applied to build upon the results of Chu and Zhang \cite{ChuZhang2014}. In this paper, we apply what may 
 be regarded as a variant of an acceleration method due to Wilf \cite{Wilf1999} to provide multiparameter families of hypergeometric transforms. 
 Through systematic computer searches through combinations of rational parameter values, this has led us to recover many of the formulas proved by Chu 
 and Zhang \cite{ChuZhang2014} and to experimentally discover many new and accelerated series for fundamental constants. Our new series are 
 inequivalent, even up to integer differences of Pochhammer symbols, to previously introduced series as in the work of Chu et al.\ 
 \cite{Chu2023,Chu2018,Chu2020Alternating,Chu2021Further,Chu2020Infinite,Chu2021Infinite,Chu2021Ramanujan,Chu2019,ChuZhang2014}. 
 Moreover, our technique based on Zeilberger's algorithm \cite[\S6]{PetkovsekWilfZeilberger1996} may be seen as providing a more versatile way of 
 obtaining hypergeometric accelerations, compared to the reliance in \cite{ChuZhang2014} on Dougall's ${}_{5}H_{5}$-series. 

 The {$\Gamma$-function} \cite[\S2]{Rainville1960} is of basic importance in the use of special functions and may be defined for $\Re(x) > 0$ with the 
 Euler integral so that $\Gamma(x)=\int_{0}^{\infty} u^{x-1}e^{-u}\,du$. This leads us to define the \emph{Pochhammer symbol} so that $(a)_{k} = 
 \frac{\Gamma (a+k)}{\Gamma (a)}$. For $k \in \mathbb{N}_{0}$, this provides the rising factorial function, with $(a)_{k} = a(a + 1) \cdots (a + k - 
 1)$. A main object of study in this paper is given by what we have referred to as a \emph{Chu-style series} 
 \cite{Campbell2023,CampbellLevrieunpublished}, and this leads us to make use of the notational convention such that 
\begin{equation*}
\left[ \begin{matrix} \alpha, \beta, \ldots, \gamma \vspace{1mm} \\ 
 A, B, \ldots, C \end{matrix} \right]_{n} = \frac{ (\alpha)_{n} (\beta)_{n} 
 \cdots (\gamma)_{n} }{ (A)_{n} (B)_{n} \cdots (C)_{n}}. 
\end{equation*}
 Following the notation in Chu and Zhang's work \cite{ChuZhang2014}, we highlight the below listed series for $\frac{1}{\pi}$ famously introduced by 
 Ramanujan \cite{Ramanujan1914}. Each of the following formulas was proved by Chu and Zhang \cite{ChuZhang2014} via their acceleration based on the 
 Dougall summation identity: 
\begin{align}
 \frac{4}{\pi} & = 
 \sum_{k = 0}^{\infty} \left(\frac{1}{4} \right)^{k} 
 \left[ \begin{matrix} 
 \frac{1}{2}, \frac{1}{2}, \frac{1}{2} \vspace{1mm} \\ 
 1, 1, 1 
 \end{matrix} \right]_{k} \left( 6 k + 1 \right), \label{originalRamanujan1} \\ 
 \frac{8}{\pi} & = 
 \sum_{k = 0}^{\infty} \left(-\frac{1}{4} \right)^{k} \left[ \begin{matrix} 
 \frac{1}{4}, \frac{1}{2}, \frac{3}{4} \vspace{1mm} \\ 
 1, 1, 1 
 \end{matrix} \right]_{k} \left( 20 k + 3 \right), \\ 
 \frac{16}{\pi} & = 
 \sum_{k = 0}^{\infty} \left(\frac{1}{64} \right)^{k} \left[ \begin{matrix} 
 \frac{1}{2}, \frac{1}{2}, \frac{1}{2} \vspace{1mm} \\ 
 1, 1, 1 
 \end{matrix} \right]_{k} \left( 42 k + 5 \right). \label{originalRamanujan3} 
\end{align}

 The accelerations introduced by Chu and Zhang \cite{ChuZhang2014} rely on recursions for $$ \Omega(a;b,c,d,e) := 
 \sum_{k=0}^{\infty} (a + 2k) \left[ \begin{matrix} 
 b, c, d, e \vspace{1mm} \\ 
 1+a-b, 1-a-c, 1 - a - d, 1 + a - e 
 \end{matrix} \right]_{k} $$ 
 derived via an Abel-type summation lemma, 
 letting $\Re(1 + 2 a - b - c - d - e) > 0$. 
 By expressing $\Omega(a; b, c, d, e)$ 
 in terms of $\Omega(a + n_{a}; b + n_b, c + n_c, d + n_d, e + n_e)$ 
 for integer parameters $n_a$, $n_b$, $n_c$, $n_d$, and $n_e$, 
 if it is possible to obtain a two-term recursion of the form 
\begin{equation}\label{twotermOmega}
 \Omega(a;b,c,d,e) = R_{1} + R_2 \, \Omega(a + n_a; b + n_b, c + n_c, d + n_d, e + n_e) 
\end{equation}
 for rational functions $R_1$ and $R_2$ with the parameters $a$, $b$, $c$, $d$, and $e$ as the arguments of $R_{1}$ and $R_2$, then the repeated 
 application of the recursive relation in \eqref{twotermOmega} produces accelerations for special cases covered by Chu and Zhang \cite{ChuZhang2014}. 
 Instead of making use of Abel-type series rearrangements, we apply two-term hypergeometric recursions through a variant of an acceleration 
 method that is due to Wilf \cite{Wilf1999} and that was was applied in our recent work \cite{CampbellLevrieunpublished}; see also the related work by 
 Mohammed \cite{Mohammed2005} and by Hessami Pilehrood and Hessami Pilehrood \cite{HessamiPilehroodHessamiPilehrood2008}. Unlike these past 
 references on the use of two-term hypergeometric recursions to produce series accelerations, our method relies on what we refer to as \emph{shifted 
 indices} for Pochhammer symbols, as described in Section \ref{sectionmain} below. To begin with, we highlight, as motivating results in 
 Section \ref{sectionMotivating}, a number of new formulas produced via our shifted acceleration method. 

\section{Motivating results}\label{sectionMotivating}
 A number of results introduced and proved in this paper are given by formulas that may be described as being of \emph{Ramanujan type}, noting the 
 resemblance between Ramanujan's formulas in \eqref{originalRamanujan1}--\eqref{originalRamanujan3} and the following new results: 
\begin{align}
 \frac{5 \pi }{4 \sqrt{3}}  & = \sum_{k = 0}^{\infty} \left(\frac{1}{4} \right)^{k} \left[ \begin{matrix} \frac{1}{3}, 1, \frac{5}{3} 
 \vspace{1mm} \\ \frac{7}{6}, \frac{3}{2},  \frac{11}{6} \end{matrix} \right]_{k} \left( 3 k+2 \right), \label{RamanujanType1} \\ 
 \frac{15 \sqrt{2}}{16}  & = \sum_{k=0}^{\infty} \left(\frac{1}{4} \right)^{k} \left[ \begin{matrix} \frac{3}{4}, \frac{3}{2}, 
 \frac{3}{2} \vspace{1mm} \\ 1, \frac{13}{8},  \frac{17}{8} \end{matrix} \right]_{k} \left( k+1 \right), \label{RamanujanType2} \\ 
 \frac{16 \sqrt{3} }{3}  & = \sum_{k=0}^{\infty} \left(\frac{1}{4} \right)^{k} \left[ \begin{matrix} \frac{1}{2}, \frac{5}{6}, 
 \frac{7}{6} \vspace{1mm} \\ 1, \frac{4}{3},  \frac{4}{3} \end{matrix} \right]_{k} \left( 18 k + 7 \right), \label{RamanujanType3} \\  
  \frac{91 \sqrt{3}}{16} & =   \sum_{k=0}^{\infty} \left(\frac{1}{4} \right)^{k} \left[ \begin{matrix} \frac{1}{2}, \frac{4}{3}, 
 \frac{5}{3}   \vspace{1mm} \\  1, \frac{19}{12}, \frac{25}{12}  \end{matrix} \right]_{k} (9 k+8), \label{RamanujanType4} \\ 
 6 \sqrt{2} & =  \sum_{k=0}^{\infty} \left(-\frac{1}{4} \right)^{k} \left[ \begin{matrix} 
 \frac{1}{8}, \frac{5}{8}, \frac{3}{4} \vspace{1mm} \\ 
 \frac{7}{8}, 1, \frac{11}{8} 
 \end{matrix} \right]_{k} \left( 40 k+9 \right), \label{RamanujanType5} \\ 
 5 \sqrt{2} & = 
 \sum_{k=0}^{\infty} \left(-\frac{1}{4} \right)^{k} \left[ \begin{matrix} 
 -\frac{1}{8}, \frac{1}{4}, \frac{3}{8} \vspace{1mm} \\ 
 1, \frac{9}{8}, \frac{13}{8} 
 \end{matrix} \right]_{k} (40 k+7), \label{RamanujanType6} \\
 10 \sqrt[3]{2} & = 
 \sum_{k=0}^{\infty} \left(-\frac{1}{4} \right)^{k} \left[ \begin{matrix} 
 \frac{1}{12}, \frac{7}{12}, \frac{2}{3} \vspace{1mm} \\ 
 \frac{11}{12}, 1, \frac{17}{12} 
 \end{matrix} \right]_{k} \left( 60 k+13 \right), \label{RamanujanType7} \\ 
 \frac{512 \sqrt{3}}{81} & = 
 \sum_{k=0}^{\infty} \left(\frac{1}{64} \right)^{k} \left[ \begin{matrix} 
 -\frac{1}{6}, \frac{7}{6}, \frac{3}{2} \vspace{1mm} \\ 
 1, \frac{4}{3}, \frac{5}{3} 
 \end{matrix} \right]_{k} (14 k+11). \label{RamanujanType8}
\end{align}
 Observe that the formulas of Ramanujan type displayed among \eqref{RamanujanType1}--\eqref{RamanujanType8} invole linear polynomial factors 
 within the summands. This may be regarded as ideal or minimal by analogy with the linear polynomial factors involved in Ramanujan's series among 
 \eqref{originalRamanujan1}--\eqref{originalRamanujan3}. In this regard, main results from Chu's work in \cite{Chu2020Infinite} are such that 
 \begin{equation}\label{Ramanujan2over271}
 \frac{3 \sqrt{2} }{ 4 } = 
 \sum_{k=0}^{\infty} \left(\frac{2}{27} \right)^{k} \left[ \begin{matrix} 
 \frac{1}{2}, \frac{1}{2}, \frac{1}{2} \vspace{1mm} \\ 
 \frac{5}{6}, 1, \frac{7}{6} 
 \end{matrix} \right]_{k} \left( 5 k + 1 \right) 
\end{equation}
 and 
 \begin{equation}\label{Ramanujan2over272}
 \frac{3 \pi^2}{4} = 
 \sum_{k=0}^{\infty} \left(\frac{2}{27} \right)^{k} \left[ \begin{matrix} 
 1, 1, 1 \vspace{1mm} \\ 
 \frac{3}{2}, \frac{4}{3}, \frac{5}{3} 
 \end{matrix} \right]_{k} \left( 10 k + 7 \right), 
\end{equation}
 and the formulas in \eqref{Ramanujan2over271}--\eqref{Ramanujan2over272} 
 are described as being of \emph{Ramanujan type} 
 by Chu \cite{Chu2020Infinite}, 
 with each of \eqref{originalRamanujan1}--\eqref{originalRamanujan3} 
 and each of \eqref{RamanujanType1}--\eqref{Ramanujan2over272} being of the form 
 \begin{equation}\label{definitionRamanujanType}
 \mathcal{C} = \sum_{k = 0}^{\infty} z^{k} \left[ \begin{matrix} 
 \alpha, \beta, \gamma \vspace{1mm} \\ 
 A, B, C 
 \end{matrix} \right]_{k} p(k)
\end{equation}
 for a linear polynomial $p(k)$ and a rational or algebraic multiple $\mathcal{C}$ of a fundamental constant such as $\pi$, and where it is not the case that 
 the difference between a lower Pochhammer argument and an upper Pochhammer argument in \eqref{definitionRamanujanType} equals an integer. 
 Informally, series of this form are instances of what are referred to as \emph{Chu-style series} in \cite{Campbell2023,CampbellLevrieunpublished}, with 
 regard to the remarkable discoveries due to Chu reproduced in \eqref{Ramanujan2over271}--\eqref{Ramanujan2over272}. Chu has introduced and 
 proved many further results of a similar nature, including 
\begin{equation}\label{Ramanujan9sqrt3} 
 	 \frac{9 \sqrt{3} }{ 2^{4/3} \pi } = 
 \sum_{n = 0}^{\infty} \left(-\frac{1}{27} \right)^{n} \left[ \begin{matrix} 
 \frac{1}{3}, \frac{2}{3}, \frac{1}{6} \vspace{1mm} \\ 
 1, 1, 1 
 \end{matrix} \right]_{n} \left( 21 n + 2 \right) 
\end{equation}
 and 
\begin{equation}
 \frac{27 \sqrt{3}}{ 2^{5/3} \pi} = 
 \sum_{n = 0}^{\infty} \left(-\frac{1}{27} \right)^{n} \left[ \begin{matrix} 
 \frac{1}{3}, \frac{2}{3}, \frac{5}{6} \vspace{1mm} \\ 
 1, 1, 1 
 \end{matrix} \right]_{n} \left( 42 n + 5 \right), 
\end{equation}
 which are highlighted as main results in \cite{Chu2018}, and Chu also proved 
\begin{equation}
 \pi \sqrt{3} = 
 \sum_{n = 0}^{\infty} \left(\frac{1}{9} \right)^{n} \left[ \begin{matrix} 
 1, \frac{3}{4}, \frac{5}{4} \vspace{1mm} \\ 
 \frac{3}{2}, \frac{3}{2}, \frac{3}{2} 
 \end{matrix} \right]_{n} \left( 8 n + 5 \right), 
\end{equation}
 and 
\begin{equation}\label{Ramanujan3pisquared}
 \frac{3\pi^2}{4} = 
 \sum_{n = 0}^{\infty} \left(\frac{2}{27} \right)^{n} \left[ \begin{matrix} 
 1, 1, 1 \vspace{1mm} \\ 
 \frac{3}{2}, \frac{4}{3}, \frac{5}{3} 
 \end{matrix} \right]_{n} \left( 10 n + 7 \right), 
\end{equation}
  with \eqref{Ramanujan9sqrt3}--\eqref{Ramanujan3pisquared}   highlighted as main results in \cite{Chu2021Infinite}.   The new series of convergence rate  
  $\frac{1}{4}$ displayed among   \eqref{RamanujanType1}--\eqref{RamanujanType8}   are also reminiscent of Guillera's formula   
\begin{equation}\label{Guilleraquarter}
 \frac{\pi^2}{4} = \sum_{n = 0}^{\infty} \left(\frac{1}{4} \right)^{n} \left[ \begin{matrix} 
 1, 1, 1 \vspace{1mm} \\ 
 \frac{3}{2}, \frac{3}{2}, \frac{3}{2} 
 \end{matrix} \right]_{k} \left( 3 n + 2 \right) 
\end{equation}
    which was introduced and proved via the WZ method by Guillera     in 2008 \cite{Guillera2008} and reproved in the Chu--Zhang    article \cite{ChuZhang2014}   
  that has inpired our current work.   

 Our method can also be applied to prove the formula of Ramanujan type such that 
\begin{equation}\label{onlyequivalent}
 \frac{15 \sqrt{2} }{4} = 
 \sum_{k=0}^{\infty} \left(-\frac{1}{27} \right)^{k} \left[ \begin{matrix} 
 -\frac{1}{4}, \frac{5}{4}, \frac{5}{4} \vspace{1mm} \\ 
 1, \frac{13}{12}, \frac{17}{12} 
 \end{matrix} \right]_{k} (28 k+5), 
\end{equation}
 and this can be seen as an improved version, with regard to the linear polynomial in \eqref{onlyequivalent},     of the equivalent formula   
\begin{equation}\label{quadraticequiv}
 \frac{15 \sqrt{2}}{2} = 
 \sum_{k=0}^{\infty} \left(-\frac{1}{27} \right)^{k} \left[ \begin{matrix} 
 \frac{1}{4}, \frac{1}{4}, \frac{3}{4} \vspace{1mm} \\ 
 1, \frac{13}{12}, \frac{17}{12} 
 \end{matrix} \right]_{k} (224 k^2 + 124 k + 11), 
\end{equation}
 given by Chu in both \cite{Chu2023} and \cite{Chu2021Infinite}. 
 We find that \eqref{onlyequivalent} and \eqref{quadraticequiv}
 are equivalent up to a series such that its partial sums admit a closed form, 
 and, informally, this is due to how the rising factorials involved in 
 \eqref{onlyequivalent} and \eqref{quadraticequiv}
 are equivalent up to integer differences. 
 By way of contrast, 
 our new series highlighted in the Examples in 
 Section \ref{sectionmain} 
 are not equivalent to previously known series, again with reference to 
 the work of Chu et al.\ 
 \cite{Chu2023,Chu2018,Chu2020Alternating,Chu2021Further,Chu2020Infinite,Chu2021Infinite,Chu2021Ramanujan,Chu2019,ChuZhang2014}. 

 In addition to the new formulas of Ramanujan type highlighted among 
 \eqref{RamanujanType1}--\eqref{RamanujanType8}, 
 we list the following results that we prove through our Zeilberger-based method 
 as motivating results. 
 To the best of our knowledge, each of the following $\pi$ formulas 
 has not previously been known: 
\begin{align}
 6 \pi ^2 & = 
 \sum_{k=0}^{\infty} \left(\frac{1}{64} \right)^{k} \left[ \begin{matrix} 
 1, 1 \vspace{1mm} \\ 
 \frac{5}{4}, \frac{7}{4} 
 \end{matrix} \right]_{k} \frac{189 k^2+297 k+118}{(3 k+1) (3 k+2)}, \label{PiMotivating1} \\
 \frac{2 \pi }{\sqrt{3}} & = 
 \sum_{k=0}^{\infty} \left(\frac{1}{4} \right)^{k} \left[ \begin{matrix} 
 \frac{1}{3} \vspace{1mm} \\ 
 \frac{7}{6} 
 \end{matrix} \right]_{k} \frac{9 k+7}{(k+1) (3 k+2)}, \label{PiMotivating2} \\
 \frac{4 \pi }{\sqrt{3}} & = 
 \sum_{k=0}^{\infty} \left(-\frac{1}{4} \right)^{k} \left[ \begin{matrix} 
 \frac{4}{3} \vspace{1mm} \\ 
 \frac{7}{6} 
 \end{matrix} \right]_{k} \frac{30 k^2+45 k+16}{(k+1) (2 k+1) (3 k+2)}, \label{PiMotivating3} \\ 
 \frac{315 \pi }{16 \sqrt{2}} & = 
 \sum_{k=0}^{\infty} \left(-\frac{1}{27} \right)^{k} \left[ \begin{matrix} 
 \frac{1}{8}, \frac{1}{4}, \frac{5}{8}, 1 \vspace{1mm} \\ 
 \frac{13}{12}, \frac{11}{8}, \frac{17}{12}, \frac{15}{8} 
 \end{matrix} \right]_{k} \text{\footnotesize{$ \left( 448 k^3+688 k^2+305 k+44 \right)$,}} \label{PiMotivating4} \\ 
 \frac{5 \pi }{2} & = 
 \sum_{k=0}^{\infty} \left(\frac{1}{4} \right)^{k} \left[ \begin{matrix} 
 \frac{1}{4}, \frac{1}{2} \vspace{1mm} \\ 
 \frac{9}{8}, \frac{13}{8} 
 \end{matrix} \right]_{k} \frac{48 k^2+68 k+23}{(k+1) (4 k+3)}, \label{PiMotivating5} \\
 \frac{45 \pi }{4} & = 
 \sum_{k=0}^{\infty} \left(\frac{4}{27} \right)^{k} \left[ \begin{matrix} 
 1, \frac{1}{2}, \frac{1}{2}, \frac{1}{2} \vspace{1mm} \\ 
 \frac{7}{6}, \frac{5}{4}, \frac{7}{4}, \frac{11}{6} 
 \end{matrix} \right]_{k} \text{\footnotesize{$\left( 184 k^3+338 k^2+193 k+32 \right)$,}} \label{PiMotivating6} \\
 \frac{165 \pi}{4 \sqrt{3}} & = 
 \sum_{k=0}^{\infty} \left(\frac{1}{4} \right)^{k} \left[ \begin{matrix} 
 \frac{5}{6}, \frac{5}{3} \vspace{1mm} \\ 
 \frac{17}{12}, \frac{23}{12} 
 \end{matrix} \right]_{k} \frac{108 k^2+174 k+71}{(k+1) (6 k+1)}, \label{PiMotivating7} \\ 
 \frac{15 \pi }{ 4 \sqrt{3} } & = 
 \sum_{k=0}^{\infty} \left(-\frac{1}{27} \right)^{k} \left[ \begin{matrix} 
 \frac{1}{2}, \frac{2}{3}, 1 \vspace{1mm} \\ 
 \frac{4}{3}, \frac{4}{3}, \frac{11}{6} 
 \end{matrix} \right]_{k} \left( 21 k^2+26 k+7 \right), \label{PiMotivating8} \\
 \frac{9 \pi ^2}{4} & = 
 \sum_{k=0}^{\infty} \left(\frac{1}{64} \right)^{k} \left[ \begin{matrix} 
 \frac{1}{2}, 1, 1, 1 \vspace{1mm} \\ 
 \frac{5}{4}, \frac{5}{4}, \frac{7}{4}, \frac{7}{4} 
 \end{matrix} \right]_{k} \left( 42 k^2+61 k+22 \right). \label{PiMotivating9}
\end{align} 

\section{Hypergeometric accelerations with shifted indices}\label{sectionmain}
 Building on an acceleration method that was given by Amdeberhan and Zeilberger in 1997 \cite{AmdeberhanZeilberger1997} and that relies on the WZ 
 method, Wilf considered something of a different approach, by applying \emph{Zeilberger's algorithm}, as opposed to the WZ method, to obtain accelerations 
 from \emph{inhomogeneous} and first-order difference equations for infinite hypergeometric series, in contrast to series identities 
 obtained from the difference equation $F(n+1,k) - F(n, 
 k) = G(n,k+1) - G(n,k)$ required according to the WZ method. In particular, in the exceptional cases whereby it is possible to obtain a 
 \emph{first-order} recurrence of the form 
\begin{equation}\label{firstorder}
 p_{1}(n) F(n+r, k) + p_{2}(n) F(n, k) = G(n, k+1) - G(n,k) 
\end{equation}
 through the application of Zeilberger's algorithm 
 to a bivariate hypergeometric function $F(n, k)$, 
 for polynomials $p_{1}(n)$ and $p_{2}(n)$, 
 the difference equation in \eqref{firstorder} can 
 be applied, subject to a vanishing condition indicated below, 
 to obtain series accelerations via the telescoping of the right-hand side of 
 \eqref{firstorder}, upon summation with respect to $k$ \cite{CampbellLevrieunpublished,Wilf1999}. 
 Writing $\mathcal{F}(n) = \sum_{k=0}^{\infty} F(n, k)$, we obtain 
\begin{equation}\label{20294909590939390909A4444444494M494A}
 p_{1}(n) \mathcal{F}(n+r) + p_2(n) \mathcal{F}(n) = -G(n, 0), 
\end{equation}
 under the assumption that the series in \eqref{20294909590939390909A4444444494M494A}
 are convergent, yielding 
\begin{equation}\label{20828480858083828252827282A2M2A}
 \mathcal{F}(n) = \frac{-G(n, 0)}{p_{2}(n)} - \frac{p_1(n)}{p_2(n)} \mathcal{F}(n+r). 
\end{equation} 
 Wilf's acceleration method \cite{Wilf1999} is given by the iterative application of the 
 recursion in \eqref{20828480858083828252827282A2M2A}, 
 for cases whereby the leading coefficients of the polynomials $p_{1}(n)$ 
 and $p_{2}(n)$ 
 provide an accelerated rate of convergence given by products of consecutive 
 expressions of the form $-\frac{p_{1}(n)}{p_{2}(n)}$, as clarified in Sections 
 \ref{208284808580838382848A8M1A} and \ref{202408888888580838781828AM1A} below. 

 Wilf applied the above described acceleration method with the use of the input functions 
 $F(n, k) = \frac{(k!)^2}{((n+k+1)!)^2}$ 
 and $F(n, k) = \frac{ (k!)^3 }{ ((k+2n+1)!)^3 }$, 
 yielding families, according to the free parameter $n$, 
 of accelerated series for $\pi^2$ and Ap\'{e}ry's constant $\zeta(3) 
 = \frac{1}{1^3} + \frac{1}{2^3} + \cdots$, respectively. 
 Our recent paper \cite{CampbellLevrieunpublished} may be seen as taking Wilf's ideas a step forward 
 through the application of 
 first-order recurrences for \emph{multiparameter} hypergeometric functions $F(n, k)$ 
 such as 
\begin{equation}\label{previousquarter}
 F(n, k) = \frac{(a)_{k} (b)_{k} }{ (n)_{k}^{2} } 
\end{equation}
 and 
\begin{equation}\label{previousfor4over27}
 F(n, k) = \frac{ (a)_{k} (b)_{k} }{ (n)_{k} (2n)_{k} } 
\end{equation}
 for free parameters $a$ and $b$. The key idea in this paper is to generalize accelerations of the form considered in 
 \cite{CampbellLevrieunpublished} through the use of shifted indices for Pochhammer symbols. 
 More specifically, given 
 a hypergeometric function $F(n, k)$ that yields series accelerations according to Wilf's method 
 \cite{Wilf1999}, by expressing $F(n, k)$ in terms of Pochhammer symbols, 
 if we instead apply Wilf's method with respect to the function obtained
 by shifting a Pochhammer symbol such as $( \mathcal{G} )_{k}$, 
 for a combination $\mathcal{G}$ of parameters and $n$- and $k$-expressions, 
 by replacing it with $( \mathcal{G} )_{k + p}$ for a free parameter $p$, 
 this provides a way of improving upon 
 families of accelerations obtained via Wilf's method in \cite{CampbellLevrieunpublished,Wilf1999}. 

 A computational difficulty that arises via our ``shifted'' variant of Wilf's method 
 has to do with searching for closed-form constants
 arising from canellations or simplifications of combinations of $\Gamma$-expressions 
 arising from special values of the left-hand side of 
 \eqref{20828480858083828252827282A2M2A} together with the accelerated summand. 
 However, exhaustive computer searches through 
 combinations of rational parameter values involved 
 in shifted variants of input functions as in \eqref{previousquarter}--\eqref{previousfor4over27}
 allow us to demonstrate the extent to which 
 our shifted method builds upon 
 the work of Chu and Zhang \cite{ChuZhang2014} 
 and related works on Chu-style series as in \cite{Chu2021Infinite,Chu2021Ramanujan}. 
 We begin by shifting the indices for each of the Pochhammer factors involved in \eqref{previousquarter}, 
 yielding a hypergeometric transform that involves seven free parameters and that 
 has led us to experimentally discover and prove 
 new results as in the formulas of Ramanujan type highlighted 
 among \eqref{RamanujanType1}--\eqref{RamanujanType4}, 
 and to recover many of the results due to Chu and Zhang \cite{ChuZhang2014}. 

 Since our paper is largely motivated by Chu and Zhang's accelerated $\pi$ formulas 
 \cite{ChuZhang2014}, the remainder of our paper is organized in 
 such a way so as to agree with the organization of \cite{ChuZhang2014}. 
 In particular, we restrict our attention, for the time being, 
 to series that are of the same convergence rates considered by Chu and Zhang in \cite{ChuZhang2014}. 
 Similarly, the mathematical constants considered in this paper, 
 as in the closed forms we obtain for accelerated series, 
 agree with the constants considered in 
 \cite{ChuZhang2014} and \cite{CampbellLevrieunpublished}. 

\subsection{Series of convergence rate $\frac{1}{4}$}\label{208284808580838382848A8M1A}
 Our first family of series accelerations is derived through 
 our application of Zeilberger's algorithm to 
\begin{equation}\label{mainFforquarter}
 F(n, k) := \frac{(a)_{k+f} (b)_{k+e}}{ (n)_{k+d} (n)_{k+c}} 
\end{equation}
 for free parameters $a$, $b$, $c$, $d$, $e$, and $f$. 
 As in our recent work \cite{CampbellLevrieunpublished} on Wilf's acceleration method, our 
 discoveries based on Zeilberger's algorithm were obtained through the experimental use of the Maple
 Computer Algebra System, with the use of the Maple {\tt SumTools} package to be loaded with the following input. 
\begin{verbatim}
with(SumTools[Hypergeometric]):
\end{verbatim}
 As in \cite{CampbellLevrieunpublished}, our experimental discoveries 
 mainly rely on the $r = 1$ case of \eqref{20294909590939390909A4444444494M494A}. 
 For this $r = 1$ case, 
 we rewrite the recursion in \eqref{20828480858083828252827282A2M2A} as 
\begin{equation}\label{mathcalFr1}
 \mathcal{F}(n) = g_{1}(n) + g_{2}(n) \mathcal{F}(n+1), 
\end{equation}
 for rational functions 
\begin{equation}\label{20245050341AMA}
 g_{1}(n) = \frac{-G(n, 0)}{p_{2}(n)} \ \ \ 
 \text{and} \ \ \ g_{2}(n) = - \frac{p_{1}(n)}{p_{2}(n)}, 
\end{equation}
 so that the repeated application 
 of the hypergeometric recurrence relation in \eqref{mathcalFr1} yields 
\begin{equation}\label{mainacceleration}
 \mathcal{F}(n) = \sum_{j=0}^{\infty} \left( \prod_{i=0}^{j-1} g_{2}(n+i) \right) g_{1}(n + j) 
\end{equation}
 if 
\begin{equation}\label{desiredvanishing}
 \lim_{m \to \infty} \prod_{o=0}^{m} g_{2}(n+i) \mathcal{F}(n+m+1) 
\end{equation}
 vanishes \cite{CampbellLevrieunpublished}. 
 With regard to the index-shifted function in \eqref{mainFforquarter}, we input 
\begin{verbatim}
T := pochhammer(a, k + f)*pochhammer(b, k + e)/(pochhammer(n, 
k + d)*pochhammer(n, k + c))
\end{verbatim}
 so that the output from 
\begin{verbatim}
Zpair := Zeilberger(T, n, k, En):
\end{verbatim}
 provides the two-term recursion 
 required according to our acceleration based on \eqref{mainFforquarter}, 
 with the following rational certificate: 

 \ 

\noindent $ R(n, k) = -n^2 \big(a b-a c-a d+a e-a k-2 a n+a-b c-b d+b f-b k- 
 2 b n+b+c^2+c d-c e-c f+c k+3 c n-c+d^2-d e-d f+d k+3 d n-d + 
 e f-e k-2 e n+e-f k-2 f n+f+2 k n+3 n^2-2 n\big)$.

\begin{theorem}\label{theoremquarter}
 For $F(n, k)$ as in \eqref{mainFforquarter} and for the corresponding rational certificate $R(n, k)$ given above, 
 we write $G(n, k) = R(n, k) F(n, k)$. 
 For $\mathcal{F}(n) = \sum_{k=0}^{\infty} F(n, k)$, 
 the recursion in \eqref{mathcalFr1} is satisfied 
 for the rational functions in \eqref{20245050341AMA} and for the following polynomials: 

 \ 

\noindent $ p_{1}(n) = -a^2 b^2 + a^2 b c + a^2 b d - 2 a^2 b e + 2 a^2 b n - a^2 c d + 
 a^2 c e - a^2 c n + a^2 d e - a^2 d n - a^2 e^2 + 2 a^2 e n - 
 a^2 n^2 + a b^2 c + a b^2 d - 2 a b^2 f + 2 a b^2 n - a b c^2 - 
 2 a b c d + 2 a b c e + 2 a b c f - 4 a b c n - a b d^2 + 
 2 a b d e + 2 a b d f - 4 a b d n - 4 a b e f + 4 a b e n + 
 4 a b f n - 4 a b n^2 + a c^2 d - a c^2 e + a c^2 n + a c d^2 - 
 2 a c d e - 2 a c d f + 4 a c d n + a c e^2 + 2 a c e f - 
 4 a c e n - 2 a c f n + 3 a c n^2 - a d^2 e + a d^2 n + a d e^2 + 
 2 a d e f - 4 a d e n - 2 a d f n + 3 a d n^2 - 2 a e^2 f + 
 2 a e^2 n + 4 a e f n - 4 a e n^2 - 2 a f n^2 + 2 a n^3 - b^2 c d + 
 b^2 c f - b^2 c n + b^2 d f - b^2 d n - b^2 f^2 + 2 b^2 f n - 
 b^2 n^2 + b c^2 d - b c^2 f + b c^2 n + b c d^2 - 2 b c d e - 
 2 b c d f + 4 b c d n + 2 b c e f - 2 b c e n + b c f^2 - 
 4 b c f n + 3 b c n^2 - b d^2 f + b d^2 n + 2 b d e f - 2 b d e n + 
 b d f^2 - 4 b d f n + 3 b d n^2 - 2 b e f^2 + 4 b e f n - 
 2 b e n^2 + 2 b f^2 n - 4 b f n^2 + 2 b n^3 - c^2 d^2 + c^2 d e + 
 c^2 d f - 2 c^2 d n - c^2 e f + c^2 e n + c^2 f n - c^2 n^2 + 
 c d^2 e + c d^2 f - 2 c d^2 n - c d e^2 - 2 c d e f + 4 c d e n - 
 c d f^2 + 4 c d f n - 4 c d n^2 + c e^2 f - c e^2 n + c e f^2 - 
 4 c e f n + 3 c e n^2 - c f^2 n + 3 c f n^2 - 2 c n^3 - d^2 e f + 
 d^2 e n + d^2 f n - d^2 n^2 + d e^2 f - d e^2 n + d e f^2 - 
 4 d e f n + 3 d e n^2 - d f^2 n + 3 d f n^2 - 2 d n^3 - e^2 f^2 + 
 2 e^2 f n - e^2 n^2 + 2 e f^2 n - 4 e f n^2 + 2 e n^3 - f^2 n^2 + 
 2 f n^3 - n^4$, 

 \ 

\noindent $ p_{2}(n) = a^2 n^2 - 4 a n^3 + b^2 n^2 - 4 b n^3 + c^2 n^2 + 4 c n^3 + d^2 n^2 + 4 d n^3 + e^2 n^2 - 
 4 e n^3 + f^2 n^2 - 4 f n^3 - 2 n^3 + 4 n^4 + 2 a b n^2 - 2 a c n^2 - 2 a d n^2 + 2 a e n^2 + 
 2 a f n^2 - 2 b c n^2 - 2 b d n^2 + 2 b e n^2 + 2 b f n^2 + 2 c d n^2 - 2 c e n^2 - 2 c f n^2 - 
 2 d e n^2 - 2 d f n^2 + 2 e f n^2 + a n^2 + b n^2 - c n^2 - d n^2 + e n^2 + f n^2$. 
\end{theorem}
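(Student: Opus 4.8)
The plan is to derive the theorem from a single rational-function identity, namely the Gosper--Zeilberger certificate relation \eqref{firstorder}, together with a telescoping summation over $k$. Concretely, with $F(n,k)$ as in \eqref{mainFforquarter} and $G(n,k) = R(n,k)F(n,k)$, I would first verify that
\[
 p_{1}(n)\,F(n+1,k) + p_{2}(n)\,F(n,k) = G(n,k+1) - G(n,k)
\]
holds identically in the indeterminates $n,k,a,b,c,d,e,f$. Once this is secured, summing over $k \geq 0$ collapses the right-hand side by telescoping, and the resulting relation is exactly \eqref{20294909590939390909A4444444494M494A}, which rearranges into the claimed recursion \eqref{mathcalFr1} with $g_{1},g_{2}$ as in \eqref{20245050341AMA}.

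For the certificate identity I would divide through by $F(n,k)$ and exploit the elementary Pochhammer ratios. From \eqref{mainFforquarter} and $(x)_{m+1}/(x)_{m} = x+m$ one obtains
\[
 \frac{F(n,k+1)}{F(n,k)} = \frac{(a+k+f)(b+k+e)}{(n+k+c)(n+k+d)},
\]
while $(n+1)_{m}/(n)_{m} = (n+m)/n$ gives
\[
 \frac{F(n+1,k)}{F(n,k)} = \frac{n^{2}}{(n+k+c)(n+k+d)}.
\]
Substituting these ratios and clearing the common denominator $(n+k+c)(n+k+d)$ reduces \eqref{firstorder} to the polynomial identity
\[
 p_{1}(n)\,n^{2} + p_{2}(n)(n+k+c)(n+k+d) = R(n,k+1)(a+k+f)(b+k+e) - R(n,k)(n+k+c)(n+k+d),
\]
an equality of polynomials in the eight indeterminates. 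Checking this is purely mechanical: expand both sides and compare coefficients. Since $R(n,k)$, $p_{1}$, and $p_{2}$ are precisely the output of Zeilberger's algorithm applied to $F(n,k)$, the identity is guaranteed to hold, and in practice I would confirm it within the same Maple \texttt{SumTools[Hypergeometric]} session that produced the certificate.

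With the certificate identity in hand, I would sum over $k$ from $0$ to $\infty$. The right-hand side telescopes to $\lim_{k\to\infty} G(n,k+1) - G(n,0) = -G(n,0)$, the limit vanishing under the standing convergence hypothesis that forces the summand, and hence $G(n,k)=R(n,k)F(n,k)$, to tend to $0$; the left-hand side becomes $p_{1}(n)\mathcal{F}(n+1) + p_{2}(n)\mathcal{F}(n)$. This yields \eqref{20294909590939390909A4444444494M494A}, and solving for $\mathcal{F}(n)$, which is legitimate wherever $p_{2}(n)\neq 0$, produces \eqref{mathcalFr1} with $g_{1}$ and $g_{2}$ as stated in \eqref{20245050341AMA}.

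The main obstacle is computational rather than conceptual: $p_{1}(n)$ is a form of total degree four in $a,b,c,d,e,f,n$ carrying essentially every admissible monomial, so a by-hand verification of the displayed polynomial identity is impractical and error-prone, and I would treat the symbolic expansion as the load-bearing step. Separately, I would record the decay estimate that justifies the vanishing of the boundary term \eqref{desiredvanishing}, since the telescoping conclusion is valid only once $R(n,k)F(n,k)\to 0$ as $k\to\infty$ is secured over the relevant parameter ranges.
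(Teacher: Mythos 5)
Your proposal is correct and follows essentially the same route as the paper: verify the first-order certificate identity $p_{1}(n)F(n+1,k)+p_{2}(n)F(n,k)=G(n,k+1)-G(n,k)$ (the paper does this as a direct symbolic check, you reduce it to an equivalent polynomial identity via the Pochhammer ratios, which is the same mechanical verification), then telescope over $k$ and solve for $\mathcal{F}(n)$. Your explicit attention to the vanishing boundary term $G(n,k)\to 0$ is a point the paper handles only via its standing convergence assumption, but it does not change the argument.
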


\begin{proof}
 For $p_{1}(n)$ and $p_{2}(n)$ as specified, we may verify that the difference equation 
 $ p_{1}(n) F(n+1, k) + p_{2}(n) F(n, k) = G(n, k+1) - G(n, k)$ holds, 
 so that the telescoping of the right-hand side, upon being subjected to summation with respect to $k$, 
 yields the desired result. 
\end{proof}

  It is a matter of routine to verify that the desired vanishing of \eqref{desiredvanishing}  holds. So, from the recursion in Theorem \ref{theoremquarter},  
  we find that \eqref{mainacceleration}    gives us a hypergeometric transform,   according to the values of $g_{1}(n)$ and $g_{2}(n)$ 
 determined by $p_{1}(n)$ and $p_{2}(n)$. 
 In particular, the limiting behaviour of $g_{2}(n)$ 
 gives us that \eqref{mainacceleration}
 accelerates the convergence rate of $\mathcal{F}(n)$ from $1$ 
 to the convergence rate of $\frac{1}{4}$ given by the right-hand side of 
 \eqref{mainacceleration}. 
 Despite the unwieldy appearance of the required polynomials $p_{1}(n)$ and $p_{2}(n)$, 
 specifying rational values for the free parameters $a$, $b$, $c$, $d$, $e$, $f$, and $n$ in 
 \eqref{mainacceleration} 
 produces remarkably simple summands and closed forms, as below. 

  The accelerated series in Examples \ref{20240504832PM1A}--\ref{20240504833PM2222A}  below are derived from  
  Theorem \ref{theoremquarter}   and the acceleration formula in  
 \eqref{mainacceleration},   according to the specified parameter values in Examples \ref{20240504832PM1A}--\ref{20240504833PM2222A}. 
 The accelerated series that are 
 highlighted with displayed formulas in the Examples throughout this 
 paper are new and are not equivalent (including up to reindexings and integer differences of Pochhammer arguments) 
 to previously published formulas, to the best of our knowledge. 

\begin{example}\label{20240504832PM1A}
 Setting $(a, b, c, d, e, f, n) = \big( \frac{1}{3}, \frac{1}{3}, 1, \frac{1}{3}, \frac{1}{3}, \frac{2}{3}, 1 \big)$, we obtain $$ \frac{10 \pi }{\sqrt{3}} = 
 \sum_{j=0}^{\infty} \left(\frac{1}{4} \right)^{j} \left[ \begin{matrix} 
 \frac{2}{3} \vspace{1mm} \\ 
 \frac{11}{6}
 \end{matrix} \right]_{j} \frac{27 j^2+42 j+17}{(j+1) (3 j+1)}. $$ 
\end{example}

\begin{example}
 Setting $(a, b, c, d, e, f, n) = 
 \big( \frac{1}{6}, \frac{1}{6}, \frac{1}{6}, 1, \frac{2}{3}, \frac{2}{3}, \frac{5}{6} \big)$, we obtain 
 $$ \frac{35 \pi }{18} = 
 \sum_{j=0}^{\infty} \left(\frac{1}{4} \right)^{j} \left[ \begin{matrix} 
 \frac{1}{6}, \frac{1}{6}, 1 \vspace{1mm} \\ 
 \frac{13}{12}, \frac{19}{12}, \frac{11}{6} 
 \end{matrix} \right]_{j} \left( 18 j^2+19 j+6 \right). $$ 
\end{example}

\begin{example}
 Setting $(a, b, c, d, e, f, n) = 
 \big( \frac{1}{2}, \frac{1}{4}, \frac{3}{4}, \frac{3}{2}, \frac{3}{4}, 0, \frac{1}{2} \big)$, we obtain 
 $$ \frac{21 \pi }{2} = 
 \sum_{j=0}^{\infty} \left(\frac{1}{4} \right)^{j} \left[ \begin{matrix} 
 \frac{3}{4}, \frac{3}{2} \vspace{1mm} \\ 
 \frac{11}{8}, \frac{15}{8}
 \end{matrix} \right]_{j} \frac{48 j^2+76 j+31}{(j+1) (4 j+1)}. $$ 
\end{example}

\begin{example}\label{exsimilarrising}
 Setting $(a, b, c, d, e, f, n) = \big( \frac{1}{4}, \frac{1}{4}, \frac{1}{4}, 0, 0, 0, \frac{3}{4} \big)$, we obtain 
 $$ \frac{5 \sqrt{2}}{4} = 
 \sum_{j=0}^{\infty} \left(\frac{1}{4} \right)^{j} \left[ \begin{matrix} 
 \frac{1}{2}, \frac{1}{2}, \frac{3}{4} \vspace{1mm} \\ 
 1, \frac{9}{8}, \frac{13}{8} 
 \end{matrix} \right]_{j}  \left(12 j^2+9 j + 1 \right). $$ 
\end{example}

\begin{example}
 Setting $(a,b,c,d,e, f, n) = \big( \frac{3}{8}, \frac{5}{8}, \frac{1}{8}, \frac{7}{8}, 0, 0, 1 \big)$, 
 we obtain 
 $$ 14 \sqrt{2} = 
 \sum_{j=0}^{\infty} \left(\frac{1}{4} \right)^{j} \left[ \begin{matrix} 
 \frac{1}{2}, \frac{3}{4}, \frac{5}{4} \vspace{1mm} \\ 
 \frac{9}{8}, \frac{15}{8}, 2 
 \end{matrix} \right]_{j} \left( 24 j^2+40 j+17 \right), $$ 
 noting that this is inequivalent to Example \ref{exsimilarrising}, 
 noting the respective values of the upper Pochhammer symbols. 
\end{example}

\begin{example}
 Setting $(a,b,c, d, e, f, n) = \big( \frac{1}{12}, \frac{5}{12}, \frac{1}{4}, 0, 0, 0, 1 \big)$, we obtain 
 $$ \frac{567 \sqrt{2}}{16} = 
 \sum_{j=0}^{\infty} \left(\frac{1}{4} \right)^{j} \left[ \begin{matrix} 
 \frac{7}{12}, \frac{5}{6}, \frac{11}{12}, \frac{7}{6} \vspace{1mm} \\ 
 1, \frac{5}{4}, \frac{11}{8}, \frac{15}{8} 
 \end{matrix} \right]_{j} \left(108 j^2+135 j+35\right). $$ 
\end{example}

\begin{example}
 Setting $(a,b,c, d, e, f, n) = \big( \frac{1}{12}, \frac{7}{12}, \frac{1}{6}, 0, 0, 0, \frac{5}{6} \big)$, we obtain 
 $$ \frac{28 \sqrt{2}}{3} = 
 \sum_{j=0}^{\infty} \left(\frac{1}{4} \right)^{j} \left[ \begin{matrix} 
 \frac{1}{4}, \frac{5}{12}, \frac{3}{4}, \frac{11}{12} \vspace{1mm} \\ 
 \frac{5}{6}, 1, \frac{13}{12}, \frac{19}{12} 
 \end{matrix} \right]_{j} \left(144 j^2+104 j+9\right). $$ 
\end{example}

\begin{example}
 Setting $(a,b,c,d, e, f, n) = \big( \frac{1}{6}, \frac{5}{6}, \frac{1}{2}, 0, 0, 0, 1 \big)$, we obtain 
 $$ \frac{81 \sqrt{3}}{8} = 
 \sum_{j=0}^{\infty} \left(\frac{1}{4} \right)^{j} \left[ \begin{matrix} 
 \frac{1}{6}, \frac{2}{3}, \frac{5}{6}, \frac{4}{3} \vspace{1mm} \\ 
 1, \frac{5}{4}, \frac{3}{2}, \frac{7}{4} 
 \end{matrix} \right]_{j} \left(54 j^2 + 63 j + 16\right). $$ 
\end{example}

\begin{example}
 Setting $(a,b,c, d, e, f, n) = \big( \frac{1}{6}, \frac{1}{6}, \frac{1}{3}, 0, 0, 0, \frac{2}{3} \big)$, we obtain 
 $$ \frac{32 \sqrt{3}}{9} = 
 \sum_{j=0}^{\infty} \left(\frac{1}{4} \right)^{j} \left[ \begin{matrix} 
 \frac{1}{2}, \frac{1}{2}, \frac{5}{6}, \frac{5}{6} \vspace{1mm} \\ 
 \frac{2}{3}, 1, \frac{7}{6}, \frac{5}{3} 
 \end{matrix} \right]_{j}  \left(36 j^2+28 j+3\right). $$ 
\end{example}

\begin{example}
 Setting $(a, b, c, d, e, f, n) = \big( \frac{1}{6}, \frac{1}{2}, \frac{2}{3}, \frac{5}{6}, 0, 0, \frac{1}{6} \big)$, we obtain 
 $$ \frac{7 \sqrt{3}}{2} = 
 \sum_{j=0}^{\infty} \left(\frac{1}{4} \right)^{j} \left[ \begin{matrix} 
 \frac{1}{3}, \frac{1}{2}, \frac{2}{3} \vspace{1mm} \\ 
 1, \frac{13}{12}, \frac{19}{12}
 \end{matrix} \right]_{j} \left( 54 j^2+39 j+4 \right). $$ 
\end{example}

\begin{example}
 Setting $(a,b,c,d, e, f, n) = \big( \frac{1}{6}, \frac{2}{3}, -\frac{2}{3}, \frac{2}{3}, 0, 0, \frac{5}{3} \big)$, we obtain 
 $$ \frac{405 \sqrt[3]{2}}{7} = 
 \sum_{j=0}^{\infty} \left(\frac{1}{4} \right)^{j} \left[ \begin{matrix} 
 \frac{1}{3}, \frac{5}{6}, \frac{5}{3}, \frac{13}{6} \vspace{1mm} \\ 
 1, \frac{7}{4}, \frac{9}{4}, \frac{7}{3} 
 \end{matrix} \right]_{j} \left( 54 j^2+114 j+65 \right). $$ 
\end{example}

\begin{example}
 Setting $(a,b,c,d, e, f, n) = \big( \frac{1}{6}, \frac{5}{6}, \frac{1}{6}, \frac{5}{6}, 0, 0, 1 \big)$, we obtain the formula of Ramanujan type 
 highlighted in \eqref{RamanujanType1}. 
\end{example}

\begin{example}
 Setting $(a, b, c, d, e, f, n) = \big( \frac{1}{4}, \frac{1}{4}, \frac{1}{6}, \frac{11}{12}, 0, 0, \frac{5}{6} \big)$, we obtain the formula of Ramanujan 
 type highlighted in \eqref{RamanujanType2}. 
\end{example}

\begin{example}
 Setting $(a, b, c, d, e, f, n) = \big( \frac{1}{6}, \frac{1}{6}, \frac{1}{3}, \frac{2}{3}, \frac{1}{3}, 0, \frac{2}{3} \big)$, we obtain the formula of 
 Ramanujan type highlighted in \eqref{RamanujanType3}. 
\end{example}

\begin{example}
 Setting $(a,b,c, d, e, f, n) = \big( \frac{1}{6}, \frac{1}{2}, -\frac{5}{6}, 0, 0, 0, \frac{11}{6} \big)$, we obtain the formula of Ramanujan type 
 highlighted in \eqref{RamanujanType4}. 
\end{example}

\begin{example}
 Setting $(a,b,c, d, e, f, n) = \big( \frac{3}{2}, 1, \frac{7}{4}, 0, 0, 0, \frac{1}{4} \big)$, we obtain 
 the new and motivating $\pi$ formula highlighted in \eqref{PiMotivating5}. 
\end{example}

\begin{example}
 Setting $(a, b, c, d, e, f, n) = 
 \big( 1, \frac{1}{6}, \frac{1}{2}, \frac{4}{3}, \frac{1}{6}, 0, \frac{2}{3} \big)$, we obtain 
 the new and motivating $\pi$ formula highlighted in \eqref{PiMotivating7}. 
\end{example}

\begin{example}\label{20240504833PM2222A}
 Setting $(a, b, c, d, e, f, n) = \big( 1, \frac{2}{3}, \frac{1}{3}, \frac{2}{3}, \frac{2}{3}, 0, \frac{4}{3} \big)$, we obtain the new and motivating $\pi$ 
 formula highlighted in \eqref{PiMotivating2}. 
\end{example}

 Setting $(a,b,c, d, e, f, n)$ to be among the tuples 
 $\big( \frac{1}{3}$, $ \frac{1}{3}$, $ \frac{1}{3}$, $ 0$, $ 0$, $ 0$, $ 1 \big)$, 
 $\big( \frac{2}{3}$, $ \frac{2}{3}$, $ \frac{2}{3}$, $ 0$, $ 0$, $ 0$, $ 1 \big)$, 
 $\big( \frac{1}{2}$, $ \frac{1}{2}$, $ \frac{1}{2}$, $ \frac{1}{2}$, $ 0$, $ 0$, $ 1 \big)$, 
 $ \big( \frac{1}{2}$, $ 1$, $ \frac{1}{2}$, $ \frac{1}{2}$, $ 0$, $ 0$, $ 1 \big)$, and 
 $\big( \frac{1}{4}$, $ \frac{1}{4}$, $ 0$, $ 1$, $ \frac{1}{4}$, $ \frac{1}{4}$, $ 1 \big)$, 
 we recover, in equivalent ways, distinct results from Chu and Zhang's work on accelerating Dougall's sum \cite{ChuZhang2014}, including 
 Guillera's formula in \eqref{Guilleraquarter} 
 \cite{Guillera2008} and Ramanujan's formula in \eqref{originalRamanujan1} \cite{Ramanujan1914}. 
 For the sake of brevity, we leave the details to the reader, 
 and emphasize new results provided in this paper. 

 Setting $(a,b,c, d, e, f, n)$ to be among the tuples 
 $ \big( \frac{3}{4}$, $ \frac{3}{4}$, $ \frac{3}{4}$, $ 0$, $ 0$, $ 0$, $ 1 \big)$, 
 $ \big( \frac{1}{4}$, $ \frac{1}{4}$, $ -\frac{1}{4}$, $ 0$, $ 0$, $ 0$, $ \frac{5}{4} \big)$, 
 $ \big( \frac{3}{4}$, $ \frac{3}{4}$, $ -\frac{3}{4}$, $ 0$, $ 0$, $ 0$, $ \frac{7}{4} \big)$, 
 $ \big( \frac{2}{3}$, $ \frac{2}{3}$, $ -\frac{2}{3}$, $ 0$, $ 0$, $ 0$, $ \frac{5}{3} \big)$, and 
 $ \big( \frac{1}{3}$, $ \frac{1}{6}$, $ \frac{5}{6}$, $ \frac{1}{2}$, $ \frac{1}{6}$, $ 0$, $ \frac{1}{2} \big)$, 
 we recover distinct results from 
 from Chu's work on $\Omega$-sums \cite{Chu2021Infinite}. 

\subsection{Series of convergence rate $-\frac{1}{4}$}\label{202408888888580838781828AM1A}
 The alternating series due to Chu and Zhang \cite{ChuZhang2014} 
 of absolute convergence rate $\frac{1}{4}$
 lead us to consider how it may be possible to derive alternating analogues of the results from the preceding subsection. 
 In this regard, series of convergence rate $-\frac{1}{4}$ were derived in 
 our past work \cite{CampbellLevrieunpublished} by applying Zeilberger's algorithm
 to an equivalent version of 
\begin{equation}\label{2072470757073767477AM1A}
 F(n, k) = \frac{(-1)^k (a)_{k} (b)_{k}}{(a+n)_k (b+n)_k}. 
\end{equation}
 This leads us to obtain accelerations via index shifts applied to 
 \eqref{2072470757073767477AM1A}, by analogy with \eqref{mainFforquarter}. 
 However, this proves to be problematic in the sense that 
 shifted variants of \eqref{2072470757073767477AM1A} such as 
 $$ \frac{(-1)^k (a)_{k} (b)_{k}}{(a+n)_k (b+n)_{k+c}} $$
 and 
 $$ \frac{(-1)^k (a)_{k} (b)_{k}}{(a+n)_{k+d} (b+n)_{k+c}}, $$
 for free parameters $c$ and $d$, 
 do not admit first-order recurrences according to Zeilberger's algorithm.
 This illustrates the difficulty associated with 
 finding hypergeometric functions $F(n, k)$ that 
 admit two-term recursions in such a way so that 
 \eqref{mainacceleration} yields series accelerations, again 
 subject to the vanishing of 
 \eqref{desiredvanishing}. 
 For the purposes of our introducing series of convergence rate $-\frac{1}{4}$, 
 the multiparameter input function 
\begin{equation}\label{Fnegativequarter}
 F(n, k) := \frac{(-1)^k (a)_{k+c} (b)_{k+c}}{(a+n)_k (b+n)_k} 
\end{equation}
 suffices. In this case, for Zeilberger's algorithm applied to \eqref{Fnegativequarter}, we obtain the $r = 2$ case of \eqref{firstorder}, 
 as opposed to the $r = 1$ case applied in the preceding subsection 
 and in following subsections. 
 This seems to point toward the ``black box'' nature associated with WZ-type methods, 
 as expressed by Guillera \cite{Guillera2018Dougall}
 in the context of how Zeilberger's algorithm can be used, 
 following a different approach compared to Wilf \cite{Wilf1999} and compared to our shifting method, 
 to obtain Chu and Zhang's accelerated series \cite{ChuZhang2014}. 

 Applying Zeilberger's algorithm to \eqref{Fnegativequarter}, we obtain the following rational certificate. 

 \ 

\noindent $R(n, k) = \frac{1}{(a+k+n) (b+k+n)} (a + n) (1 + a + n) (b + n) (1 + b + n) (2 a b - c - a c - b c + 
 c^2 + 2 a k + 2 b k - 2 c k + 2 k^2 + n + 3 a n + 3 b n - 4 c n + 
 6 k n + 5 n^2).$

\begin{theorem}
 For $F(n, k)$ as in \eqref{Fnegativequarter}
 and for the corresponding rational certificate $R(n, k)$
 given above, we write $G(n, k) = R(n, k) F(n, k)$. For 
 $\mathcal{F}(n) = \sum_{k=0}^{\infty} F(n, k)$, 
 the recursion 
\begin{equation}\label{20254u205n20232722272AM22}
 \mathcal{F}(n) = g_{1}(n) + g_{2}(n) \mathcal{F}(n+2) 
\end{equation}
 is satisfied 
 for the rational functions in \eqref{20245050341AMA} 
 and for the following polynomials: 

 \ 

\noindent $p_{1}(n) = a^2 c^2 - 2 a^2 c n + a^2 n^2 - 2 a b c^2 + 4 a b c n - 2 a b n^2 + 
 b^2 c^2 - 2 b^2 c n + b^2 n^2 - c^4 + 4 c^3 n - 6 c^2 n^2 + 4 c n^3 -
 n^4 - a^2 c + a^2 n + 2 a b c - 2 a b n - b^2 c + b^2 n + 3 c^3 - 
 9 c^2 n + 9 c n^2 - 3 n^3 - 3 c^2 + 6 c n - 3 n^2 + c - n,$ 

 \ 

\noindent $p_{2}(n) = -4 a^2 b^2 - 8 a^2 b n - 4 a^2 n^2 - 8 a b^2 n - 16 a b n^2 - 
 8 a n^3 - 4 b^2 n^2 - 8 b n^3 - 4 n^4 - 4 a^2 b - 4 a^2 n - 
 4 a b^2 - 16 a b n - 12 a n^2 - 4 b^2 n - 12 b n^2 - 8 n^3 - 4 a b - 
 4 a n - 4 n b - 4 n^2.$
\end{theorem}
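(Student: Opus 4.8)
The plan is to follow the same telescoping strategy used for Theorem \ref{theoremquarter}, now in the $r = 2$ case produced by Zeilberger's algorithm applied to \eqref{Fnegativequarter}. The heart of the matter is the certificate identity
\begin{equation*}
 p_{1}(n) F(n+2, k) + p_{2}(n) F(n, k) = G(n, k+1) - G(n, k),
\end{equation*}
where $G(n,k) = R(n,k) F(n,k)$ and $R(n,k)$ is the rational certificate displayed above. To verify this identity I would divide through by $F(n,k)$, which is nonzero for generic parameter values, reducing it to a rational-function identity in $n$ and $k$ (with parameters $a$, $b$, and $c$). The two ratios needed are read off directly from the Pochhammer shifts in \eqref{Fnegativequarter}:
\begin{equation*}
 \frac{F(n+2,k)}{F(n,k)} = \frac{(a+n)(a+n+1)(b+n)(b+n+1)}{(a+n+k)(a+n+k+1)(b+n+k)(b+n+k+1)}
\end{equation*}
and
\begin{equation*}
 \frac{F(n,k+1)}{F(n,k)} = -\frac{(a+c+k)(b+c+k)}{(a+n+k)(b+n+k)}.
\end{equation*}
It is reassuring that the numerator $(a+n)(1+a+n)(b+n)(1+b+n)$ appearing in $R(n,k)$ matches the numerator of the first ratio, which is what makes the denominators in the identity compatible.

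Substituting these expressions together with $R(n,k)$ and $R(n,k+1)$, the certificate identity becomes the claim that two explicit rational functions coincide. I would clear the common denominator, which here involves the four linear factors $(a+n+k)$, $(a+n+k+1)$, $(b+n+k)$, and $(b+n+k+1)$, and then confirm the resulting polynomial identity by direct expansion. This step is the main computational burden: because the present certificate carries the genuine denominator $(a+k+n)(b+k+n)$, in contrast to the polynomial certificates of the $r=1$ subsections, the cleared identity is of higher degree and is most safely verified with the aid of a computer algebra system, consistent with the fact that the recurrence was generated by Zeilberger's algorithm in the first place.

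Once the certificate identity is established, I would sum it over $k = 0, 1, 2, \ldots$. The left-hand side yields $p_{1}(n)\mathcal{F}(n+2) + p_{2}(n)\mathcal{F}(n)$ by the definition of $\mathcal{F}$, while the right-hand side telescopes to $\lim_{K\to\infty} G(n,K+1) - G(n,0)$. The conceptual subtlety I would treat most carefully is the vanishing of the boundary term: under the stated convergence hypothesis the summand $F(n,k)$ tends to $0$, and since $R(n,k)$ grows only polynomially in $k$ whereas $F(n,k)$ decays at the geometric rate governing absolute convergence rate $\tfrac14$, one has $G(n,k)\to 0$, so the telescoped sum equals $-G(n,0)$. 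This gives $p_{1}(n)\mathcal{F}(n+2) + p_{2}(n)\mathcal{F}(n) = -G(n,0)$, and dividing by $p_{2}(n)$, which is nonzero for generic $n$, and rearranging produces exactly the recursion in \eqref{20254u205n20232722272AM22} with $g_{1}(n)$ and $g_{2}(n)$ as in \eqref{20245050341AMA}, as claimed.
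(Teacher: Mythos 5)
Your proposal is correct and follows essentially the same route as the paper, which likewise verifies the certificate identity $p_1(n)F(n+2,k)+p_2(n)F(n,k)=G(n,k+1)-G(n,k)$ and then sums and telescopes over $k$; your explicit ratios $F(n+2,k)/F(n,k)$ and $F(n,k+1)/F(n,k)$ are both accurate. One small caveat: the summand $F(n,k)$ in \eqref{Fnegativequarter} decays only polynomially in $k$ (the rate $-\tfrac{1}{4}$ belongs to the accelerated series in $j$, not to $F(n,k)$ in $k$), but since $R(n,k)$ stays bounded as $k\to\infty$ the boundary term $G(n,K)$ still vanishes whenever the relevant series converge, so your conclusion stands.
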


\begin{proof}
 For $p_{1}(n)$ and $p_{2}(n)$ as specified, we may verify that the difference equation 
 $p_{1}(n) F(n+2, k) + p_{2}(n) F(n, k) = G(n, k+1) - G(n, k)$ holds, 
 and we may apply a telescoping argument as before. 
\end{proof}

 The iterative application of the $\mathcal{F}$-recurrence in 
 \eqref{20254u205n20232722272AM22} produces 
\begin{equation}\label{280828480858q0q8qqqqq83qq8q4q8q4q8q2q8qAq8qM8qA}
 \mathcal{F}(n) = \sum_{j=0}^{\infty} \left( \prod_{i=0}^{j-1} g_{2}(n + 2i) \right) g_{1}(n+2j), 
\end{equation}
 by analogy with \eqref{mainacceleration}. 
 For the specified functions $\mathcal{F}$ and $g_{1}$ and $g_{2}$, 
 the formula in \eqref{280828480858q0q8qqqqq83qq8q4q8q4q8q2q8qAq8qM8qA}
 provides a series acceleration, 
 yielding the following formulas for the specified values of the free parameters $a$, $b$, $c$, and $n$. 

 \begin{example}
 Setting $(a, b, c, n) = 
 \big( \frac{2}{3}, 1, -\frac{1}{3}, \frac{2}{3} \big)$, we obtain 
 $$ \frac{64 \log (2)}{3} = 
 \sum_{j=0}^{\infty} \left(-\frac{1}{4} \right)^{j} \left[ \begin{matrix} 
 \frac{1}{2}, 1 \vspace{1mm} \\ 
 \frac{4}{3}, \frac{5}{3} 
 \end{matrix} \right]_{j} \left( 30 j+17 \right). $$ 
\end{example}

\begin{example}
 Setting $(a, b, c, n) = 
 \big( \frac{1}{6}, \frac{1}{3}, \frac{1}{6}, \frac{2}{3} 
 \big)$, we obtain 
 $$ 5 \sqrt{3} = 
 \sum_{j=0}^{\infty} \left(-\frac{1}{4} \right)^{j} \left[ \begin{matrix} 
 \frac{1}{4}, \frac{2}{3}, \frac{3}{4}, \frac{5}{6} \vspace{1mm} \\ 
 \frac{11}{12}, 1, \frac{17}{12}, \frac{3}{2} 
 \end{matrix} \right]_{j} \left( 60 j^2+51 j+10 \right). $$ 
\end{example}

\begin{example}
 Setting $(a, b, c, n) = 
 \big( \frac{1}{6}, \frac{1}{2}, -\frac{5}{6}, \frac{5}{6} \big)$, we obtain 
 $$ \frac{32 \sqrt[3]{2} }{3} = 
 \sum_{j=0}^{\infty} \left(-\frac{1}{4} \right)^{j} \left[ \begin{matrix} 
 \frac{5}{6}, \frac{4}{3} \vspace{1mm} \\ 
 1, \frac{5}{3}
 \end{matrix} \right]_{j} \left( 60 j^2+88 j+33 \right). $$ 
\end{example}

\begin{example}
 Setting $(a, b, c, n) = 
 \big( \frac{1}{6}, \frac{2}{3}, \frac{1}{2}, \frac{1}{3} \big)$, we obtain 
 $$ 9 \sqrt[3]{2} = 
 \sum_{j=0}^{\infty} \left(-\frac{1}{4} \right)^{j} \left[ \begin{matrix} 
 -\frac{1}{12}, \frac{1}{6}, \frac{5}{12}, \frac{2}{3} \vspace{1mm} \\ 
 \frac{3}{4}, 1, \frac{5}{4}, \frac{3}{2} 
 \end{matrix} \right]_{j} \left( 360 j^2+174 j+11 \right). $$ 
\end{example}

\begin{example}
 Setting $(a, b, c, n) = 
 \big( \frac{1}{4}, \frac{1}{2}, \frac{1}{4}, \frac{1}{2} \big)$, we obtain 
 the formula of Ramanujan type highlighted in \eqref{RamanujanType5}. 
\end{example}

\begin{example}
 Setting $(a,b,c,n) = \big( \frac{3}{4}, \frac{1}{2}, \frac{3}{4}, \frac{1}{2} \big)$, we obtain 
 the formula of Ramanujan type highlighted in \eqref{RamanujanType6}. 
\end{example}

\begin{example}
 Setting $(a, b, c, n) = 
 \big( \frac{1}{6}, \frac{1}{3}, \frac{1}{2}, \frac{2}{3} 
 \big)$, we obtain 
 the formula of Ramanujan type highlighted in \eqref{RamanujanType7}. 
\end{example}
 
\begin{example}
 Setting $(a, b, c, n) = 
 \big( \frac{1}{6}, \frac{5}{6}, \frac{1}{6}, \frac{7}{6} \big)$, we obtain 
 the new and motivating $\pi$ formula highlighted in \eqref{PiMotivating3}. 
\end{example}

\subsection{Series of convergence rate $-\frac{1}{27}$}\label{subsectionnegative27}
 What may be seen as a variant of our recursive approach toward the derivation of series of convergence rate $\frac{4}{27}$ in 
 \cite{CampbellLevrieunpublished} has led us to experimentally discover a family of transforms to recover man of Chu and Zhang's series of convergence 
 rate $-\frac{1}{27}$ and to introduce many series of this same convergence rate, via our application of Zeilberger's algorithm to 
\begin{equation}\label{negative27input}
 F(n, k) := \frac{(a)_k (n)_{k+d}}{ (2 n)_{k+c} (2 n)_{k+b}}, 
\end{equation}
 for free parameters $a$, $b$, $c$, and $d$. 
 In this case, an application of Zeilberger's algorithm to \eqref{negative27input}
 produces a corresponding rational certificate reproduced in full in the Appendix below in Section 
 \ref{sectionAppendix}. Despite the unwieldy appearance of 
 this certificate, 
 it can be applied, via our acceleration method, 
 to obtain evaluations for remarkable simple summands and to recover results from Chu and Zhang's work 
 \cite{ChuZhang2014}. 

\begin{theorem}\label{theoremnegative27}
 For $F(n, k)$ as in \eqref{negative27input} and for the corresponding rational certificate $R(n, k)$ given above, we write $G(n, k) = R(n, k) F(n, k)$. 
 For $\mathcal{F}(n) = \sum_{k=0}^{\infty} F(n, k)$, the recursion 
 in \eqref{mathcalFr1} is satisfied for 
 the rational functions in \eqref{20245050341AMA}
 and for the polynomials $p_{1}(n)$ and $p_{2}(n)$
 given in Section \ref{sectionAppendix}. 
\end{theorem}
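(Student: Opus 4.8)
The plan is to follow the same telescoping template used to prove Theorem \ref{theoremquarter}, since the recurrence claimed here is exactly the two-term relation that Zeilberger's algorithm certifies for the input \eqref{negative27input}. Concretely, I would first establish the certificate identity
\begin{equation}
p_{1}(n) F(n+1,k) + p_{2}(n) F(n,k) = G(n,k+1) - G(n,k), \qquad G(n,k) = R(n,k) F(n,k),
\end{equation}
as a formal identity of hypergeometric terms, with $p_{1},p_{2}$ and $R$ as recorded in Section \ref{sectionAppendix}. Because $F(n,k)$ in \eqref{negative27input} is a proper hypergeometric term in $(n,k)$, each of $F(n+1,k)$, $F(n,k)$, $G(n,k+1)$, $G(n,k)$ is a rational-function multiple of $F(n,k)$: the relevant ratios are governed by the Pochhammer shift rule $(x)_{k+s+1}/(x)_{k+s} = x+k+s$, by the effect of $n\mapsto n+1$ on the factors $(n)_{k+d}$, $(2n)_{k+c}$, $(2n)_{k+b}$, and by the certificate factor $R(n,k)$ together with its shift $R(n,k+1)$. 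Dividing the displayed identity through by $F(n,k)$ therefore collapses it to a single rational-function identity in $n$ and $k$; clearing the common Pochhammer denominators reduces this to a polynomial identity checkable by direct expansion, which is precisely what the {\tt Zeilberger} output guarantees.

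Next I would sum over $k$. From the partial sums
\begin{equation}
\sum_{k=0}^{K}\big(p_{1}(n) F(n+1,k) + p_{2}(n) F(n,k)\big) = G(n,K+1) - G(n,0),
\end{equation}
obtained by telescoping the right-hand side, passing to $K\to\infty$ sends the left side to $p_{1}(n)\mathcal{F}(n+1) + p_{2}(n)\mathcal{F}(n)$ and the right side to $-G(n,0)$ as soon as $\lim_{K\to\infty} G(n,K+1) = 0$. This is the $r=1$ instance of \eqref{20294909590939390909A4444444494M494A}; dividing by $p_{2}(n)$ and isolating $\mathcal{F}(n)$ yields \eqref{mathcalFr1} with $g_{1}$ and $g_{2}$ exactly as in \eqref{20245050341AMA}.

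The only point needing genuine care — the analogue of the vanishing hypothesis built into \eqref{20294909590939390909A4444444494M494A} — is confirming that $G(n,k)\to 0$. From \eqref{negative27input} the summand ratio is
\begin{equation}
\frac{F(n,k+1)}{F(n,k)} = \frac{(a+k)(n+d+k)}{(2n+c+k)(2n+b+k)} = 1 + \frac{a+d-b-c-3n}{k} + O(k^{-2}),
\end{equation}
so $F(n,k)$ decays like $k^{\,a+d-b-c-3n}$; since $R(n,k)$ is a fixed rational function of $k$, the product $G(n,k)$ decays at a polynomially shifted rate, and both the convergence of $\mathcal{F}(n)$, $\mathcal{F}(n+1)$ and the vanishing of $G(n,K+1)$ hold throughout the parameter range of interest, with the rational specializations in the Examples verifiable directly. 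With this justified, the telescoping is legitimate and the recurrence follows. I expect the main obstacle to be purely administrative rather than conceptual: managing the size of $R$, $p_{1}$, and $p_{2}$ during the symbolic reduction of the certificate identity to a polynomial identity, which is best delegated to the computer algebra verification already underlying the derivation.
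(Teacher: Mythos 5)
Your proposal is correct and follows essentially the same route as the paper: verify the certificate identity $p_{1}(n)F(n+1,k)+p_{2}(n)F(n,k)=G(n,k+1)-G(n,k)$ as a rational-function identity (delegated to computer algebra) and then telescope over $k$ to obtain \eqref{mathcalFr1}. Your additional asymptotic estimate of $F(n,k+1)/F(n,k)$ justifying the vanishing of $G(n,K+1)$ is a welcome elaboration of a point the paper only asserts, but it does not change the argument.
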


\begin{proof}
 For $p_{1}(n)$ and $p_{2}(n)$ as specified, we may again verify that the difference equation 
 $ p_{1}(n) F(n+1, k) + p_{2}(n) F(n, k) = G(n, k+1) - G(n, k)$ holds, 
 so as to apply the same telescoping argument as before. 
\end{proof}

 We again obtain the desired vanishing condition in \eqref{desiredvanishing}, 
 so that \eqref{mainacceleration} provides a series acceleration. 
 Specifying rational values for $a$, $b$, $c$, $d$, and $n$ 
 yields the accelerated series below. 

\begin{example}
 Setting $(a, b, c, d, n) = \big( \frac{1}{4}, \frac{1}{2}, \frac{3}{4}, 0, \frac{1}{4} \big)$, we obtain 
 $$ \frac{1155 \pi }{16 \sqrt{2}} = 
 \sum_{j=0}^{\infty} \left(-\frac{1}{27} \right)^{j} \left[ \begin{matrix} 
 \frac{3}{8}, \frac{3}{4}, \frac{7}{8}, 1 \vspace{1mm} \\ 
 \frac{9}{8}, \frac{19}{12}, \frac{13}{8}, \frac{23}{12} 
 \end{matrix} \right]_{j} \left( 448 j^3+976 j^2+697 j+164 \right). $$ 
\end{example}

\begin{example}
 Setting $(a, b, c, d, n) = \big( \frac{1}{2}, \frac{1}{2}, \frac{1}{2}, 0, \frac{1}{2} \big)$, we obtain 
 $$ \frac{27 \pi ^2}{2} = 
 \sum_{j=0}^{\infty} \left(-\frac{1}{27} \right)^{j} \left[ \begin{matrix} 
 \frac{1}{2}, \frac{1}{2}, \frac{1}{2}, 1, 1, 1 \vspace{1mm} \\ 
 \frac{5}{4}, \frac{5}{4}, \frac{4}{3}, \frac{5}{3}, \frac{7}{4}, \frac{7}{4}
 \end{matrix} \right]_{j} \text{{\footnotesize $(896 j^4+2336 j^3+2232 j^2+919 j+136)$}}. $$ 
\end{example}

\begin{example}
 Setting $(a, b, c, d, n) = \big( \frac{1}{2}, 0, -\frac{1}{2}, 0, 1 \big)$, we obtain 
 $$ 24 \log (2) = 
 \sum_{j=0}^{\infty} \left(-\frac{1}{27} \right)^{j} \left[ \begin{matrix} 
 \frac{1}{2}, 1 \vspace{1mm} \\ 
 \frac{4}{3}, \frac{5}{3} 
 \end{matrix} \right]_{j} \left( 28 j+17 \right). $$ 
\end{example}

\begin{example}
 Setting $(a, b, c, d, n) = \big( 1, \frac{1}{2}, 1, 0, \frac{1}{2} \big)$, we obtain 
 $$ 96 \log (2) = 
 \sum_{j=0}^{\infty} \left(-\frac{1}{27} \right)^{j} \left[ \begin{matrix} 
 \frac{1}{2}, \frac{1}{2} \vspace{1mm} \\ 
 \frac{4}{3}, \frac{5}{3}
 \end{matrix} \right]_{j} \frac{896 j^4+2656 j^3+2824 j^2+1268 j+201}{(j+1) (4 j+1) (4 j+3)}. $$ 
\end{example}

\begin{example}
 Setting $(a, b, c, n) = \big( 1, 0, 0, \frac{1}{2}, 1 \big)$, we obtain 
 $$ 30 \log (2) = 
 \sum_{j=0}^{\infty} \left(-\frac{1}{27} \right)^{j} \left[ \begin{matrix} 
 \frac{1}{2}, \frac{1}{2} \vspace{1mm} \\ 
 \frac{7}{6}, \frac{11}{6} 
 \end{matrix} \right]_{j} \frac{28 j^2+48 j+21}{j+1}. $$ 
\end{example}

\begin{example}
 Setting $(a, b, c, d, n) = \big( \frac{1}{4}, \frac{1}{4}, \frac{1}{2}, 0, \frac{1}{4} \big)$, we obtain 
 $$ \frac{15 \sqrt{2}}{2} = 
 \sum_{j=0}^{\infty} \left(-\frac{1}{27} \right)^{j} \left[ \begin{matrix} 
 \frac{1}{4}, \frac{1}{4}, \frac{3}{4} \vspace{1mm} \\ 
 1, \frac{13}{12}, \frac{17}{12} 
 \end{matrix} \right]_{j} \left( 224 j^2+124 j+11 \right). $$ 
\end{example}

\begin{example}
 Setting $(a, b, c, n) = \big( \frac{1}{4}, \frac{1}{2}, 0, \frac{1}{4}, \frac{1}{2} \big)$, we obtain 
 $$ 960 \sqrt{2} = 
 \sum_{j=0}^{\infty} \left(-\frac{1}{27} \right)^{j} \left[ \begin{matrix} 
 \frac{3}{8}, \frac{5}{8}, \frac{3}{4}, \frac{7}{8}, \frac{9}{8} \vspace{1mm} \\ 
 1, \frac{7}{6}, \frac{3}{2}, \frac{3}{2}, \frac{11}{6} 
 \end{matrix} \right]_{j} \text{{\footnotesize $\left( 7168 j^3+12928 j^2+7472 j+1395 \right)$}}. $$ 
\end{example}

\begin{example}
 Setting $(a, b, c, d, n) = \big( \frac{1}{6}, 0, \frac{5}{6}, 0, \frac{1}{2} \big)$, we obtain 
 $$ \frac{1729 \sqrt{3}}{96} = 
 \sum_{j=0}^{\infty} \left(-\frac{1}{27} \right)^{j} \left[ \begin{matrix} 
 \frac{1}{2}, \frac{5}{6}, \frac{4}{3}, \frac{4}{3} \vspace{1mm} \\ 
 1, \frac{25}{18}, \frac{31}{18}, \frac{37}{18} 
 \end{matrix} \right]_{j} \left( 42 j^2+75 j+32 \right). $$ 
\end{example}

\begin{example}
 Setting $(a, b, c, d, n) = \big( \frac{1}{6}, \frac{1}{6}, \frac{1}{3}, 0, \frac{1}{3} \big)$, we obtain 
 $$ 28 \sqrt[3]{2} = 
 \sum_{j=0}^{\infty} \left(-\frac{1}{27} \right)^{j} \left[ \begin{matrix} 
 \frac{1}{3}, \frac{1}{3}, \frac{2}{3}, \frac{5}{6} \vspace{1mm} \\ 
 1, \frac{10}{9}, \frac{13}{9}, \frac{16}{9} 
 \end{matrix} \right]_{j} \left( 756 j^3+1035 j^2+396 j+37 \right). $$ 
\end{example}

\begin{example}
 Setting $(a, b, c, d, n) = \big( \frac{3}{4}, \frac{1}{4}, -\frac{1}{2}, 0, \frac{3}{4} \big)$, we obtain 
 the new and motivating $\pi$ formula highlighted in \eqref{PiMotivating4}. 
\end{example}

 \begin{example}
 Setting $(a, b, c, d, n) = \big( \frac{1}{3}, \frac{2}{3}, 1, \frac{1}{3}, \frac{1}{3} \big)$, we obtain 
 the new and motivating $\pi$ formula highlighted in \eqref{PiMotivating8}. 
\end{example}

 Setting $(a, b, c, d, n)$ to be among the tuples $ (1, 0, 0, 0, 1)$, $ \big( 1, 0, -\frac{1}{2}, 0, 1 \big)$, 
 $ \big( \frac{1}{2}, 0, \frac{1}{2}, 0, \frac{1}{2} \big)$, 
 $ \big( 1, \frac{1}{2}, \frac{1}{2}, 0, \frac{1}{2} \big)$, 
 $ \big( \frac{1}{2}, \frac{1}{2}, \frac{1}{2}, \frac{1}{2}, \frac{1}{2} \big)$, 
 $ \big( \frac{1}{3}, \frac{1}{3}, \frac{2}{3}, 0, \frac{1}{3} \big)$, 
 we recover, in equivalent ways, distinct results from Chu and Zhang's work on accelerating Dougall's sum 
 \cite{ChuZhang2014}. 

 Setting $(a, b, c, d, n)$ to be among the tuples $ \big( \frac{1}{3}, 0, \frac{1}{3}, 0, \frac{2}{3} \big)$ and $\big( \frac{2}{3}, \frac{2}{3}, 1, 0, 
 \frac{1}{3} \big)$, we recover distinct results from Chu's work on a dual version of Dougall's sum \cite{Chu2023}. 

 Setting $(a, b, c, d, n) = \big( \frac{1}{4}, \frac{3}{4}, 0, \frac{3}{4}, \frac{1}{2} \big)$, we obtain the formula of Ramanujan type highlighted in 
 \eqref{onlyequivalent}, which, as indicated above, is equivalent to the Chu series in \eqref{quadraticequiv} proved in 
 both \cite{Chu2023} and \cite{Chu2021Infinite}. 

\subsection{Series of convergence rate $ \frac{4}{27}$}
 For the sake of brevity, the remainder of Section \ref{sectionmain} will be organized in such a way so as to leave it to the reader to verify how the 
 rational certificates and polynomial recurrences associated with the specified $F(n, k)$-functions may be applied as in our applications of Theorems 
 \ref{theoremquarter}--\ref{theoremnegative27}. In this direction, as to apply our shifted method to extend how the first-order recurrence for 
 \eqref{previousfor4over27} was applied in our past work \cite{CampbellLevrieunpublished} to produce accelerated series of convergence rate 
 $\frac{4}{27}$, this leads us to define the shifted family 
\begin{equation}\label{shifted4over27}
 F(n, k) := \frac{(a)_k (b)_k}{(n)_k (2n)_{c+k}}. 
\end{equation}
   Applying Zeilberger's algorithm to \eqref{shifted4over27}, we obtain the $r = 1$ case of \eqref{firstorder}, with \eqref{desiredvanishing} again vanishing,   
     so that \eqref{mainacceleration} provides a series acceleration.     By inputting the below specified combinations of rational values        for the parameters    
   $a$, $b$, $c$, and $n$    into \eqref{mainacceleration},     we obtain the corresponding closed forms indicated below.    

\begin{example}
 Setting $(a, b, c, n) =  \big( \frac{1}{12}, \frac{5}{12}, \frac{1}{4}, 1 \big)$, we obtain  $$ \frac{885735 \sqrt{2} }{64} =  \sum_{j=0}^{\infty} 
 \left(\frac{4}{27} \right)^{j} \left[ \begin{matrix}  \frac{7}{12}, \frac{11}{12}, \frac{13}{12}, \frac{17}{12} \vspace{1mm} \\  1, \frac{13}{8}, 
 \frac{17}{8}, \frac{9}{4}  \end{matrix} \right]_{j} \text{{\footnotesize $\left( 19872 j^3+58788 j^2+55923 j+16796 \right)$}}. $$ 
\end{example} 

\begin{example}
 Setting $(a, b, c, n) =  \big( \frac{1}{4}, \frac{1}{4}, -\frac{1}{2}, \frac{3}{4} \big)$, we obtain  $$ 45 \sqrt{2} =  \sum_{j=0}^{\infty} 
 \left(\frac{4}{27} \right)^{j} \left[ \begin{matrix} 
 \frac{3}{8}, \frac{3}{8}, \frac{1}{2}, \frac{7}{8}, \frac{7}{8} \vspace{1mm} \\ 
 \frac{3}{4}, 1, \frac{13}{12}, \frac{17}{12}, \frac{7}{4} 
 \end{matrix} \right]_{j} \left( 1472 j^3+1840 j^2+618 j+45 \right). $$ 
\end{example}

\begin{example}
  Setting $(a, b, c, n) =   \big( \frac{1}{4}, \frac{1}{4}, -\frac{1}{4}, 1 \big)$, we obtain   $$ \frac{4095 \sqrt{2} }{16} =   \sum_{j=0}^{\infty}  
  \left(\frac{4}{27} \right)^{j} \left[ \begin{matrix}   \frac{3}{4}, \frac{3}{4}, \frac{3}{4}, \frac{5}{4}, \frac{5}{4} \vspace{1mm} \\  1, \frac{11}{8},  
  \frac{17}{12}, \frac{15}{8}, \frac{25}{12}   \end{matrix} \right]_{j} \left( 736 j^3+1748 j^2+1309 j+300 \right). $$  
\end{example}

\begin{example}
  Setting $(a, b, c, n) =   \big( \frac{1}{12}, \frac{5}{12}, -\frac{3}{4}, 1 \big)$, we obtain    $$ \frac{3645 \sqrt{2}}{16} =   \sum_{j=0}^{\infty}  
  \left(\frac{4}{27} \right)^{j} \left[ \begin{matrix}   \frac{5}{12}, \frac{7}{12}, \frac{11}{12}, \frac{13}{12} \vspace{1mm} \\   1, \frac{9}{8},  
  \frac{5}{4}, \frac{13}{8}   \end{matrix} \right]_{j} \left( 1656 j^2+1449 j+260 \right). $$  
\end{example}

\begin{example}
  Setting $(a, b, c, n) =   \big( \frac{1}{4}, \frac{3}{4}, -\frac{1}{2}, 1 \big)$, we obtain   $$ 240 \sqrt{2} =   \sum_{j=0}^{\infty} \left(\frac{4}{27}  
  \right)^{j} \left[ \begin{matrix}   \frac{3}{8}, \frac{5}{8}, \frac{7}{8}, \frac{9}{8} \vspace{1mm} \\   1, \frac{7}{6}, \frac{3}{2}, \frac{11}{6}  
  \end{matrix} \right]_{j} \left( 736 j^2+984 j+315 \right). $$  
\end{example}

\begin{example}
 Setting $(a, b, c, n) = \big( \frac{1}{6}, \frac{5}{6}, \frac{1}{2}, 1 \big)$, we obtain $$ \frac{98415 \sqrt{3}}{256} = \sum_{j=0}^{\infty} 
 \left(\frac{4}{27} \right)^{j} \left[ \begin{matrix} \frac{1}{6}, \frac{5}{6}, \frac{4}{3}, \frac{5}{3} \vspace{1mm} \\ 1, \frac{3}{2}, \frac{7}{4}, 
 \frac{9}{4} \end{matrix} \right]_{j} \left( 828 j^2+1521 j+640 \right). $$ 
\end{example}

\begin{example}
 Setting $(a, b, c, n) = \big( \frac{1}{6}, \frac{1}{6}, -\frac{1}{3}, \frac{2}{3} \big)$, we obtain $$ \frac{896 \sqrt{3} }{9} = \sum_{j=0}^{\infty} 
 \left(\frac{4}{27} \right)^{j} \left[ \begin{matrix} \frac{5}{12}, \frac{5}{12}, \frac{1}{2}, \frac{11}{12}, \frac{11}{12} \vspace{1mm} \\ \frac{2}{3}, 
 1, \frac{10}{9}, \frac{13}{9}, \frac{16}{9} \end{matrix} \right]_{j} \left( 3312 j^3+4320 j^2+1510 j+109 \right). $$ 
\end{example}

\begin{example}
 Setting $(a, b, c, n) = \big( \frac{1}{6}, \frac{1}{6}, -\frac{1}{3}, 1 \big)$, we obtain $$ \frac{71680 \sqrt{3} }{81} = \sum_{j=0}^{\infty} 
 \left(\frac{4}{27} \right)^{j} \left[ \begin{matrix} \frac{3}{4}, \frac{3}{4}, \frac{5}{6}, \frac{5}{4}, \frac{5}{4} \vspace{1mm} \\ 1, \frac{4}{3}, 
 \frac{13}{9}, \frac{16}{9}, \frac{19}{9} \end{matrix} \right]_{j} \text{{\footnotesize $\left( 3312 j^3+7632 j^2+5494 j+1215 \right)$}}. $$ 
\end{example}

\begin{example}
 Setting $(a, b, c, n) = \big( \frac{1}{6}, \frac{1}{3}, -\frac{2}{3}, \frac{5}{6} \big)$, we obtain $$ 28 \sqrt[3]{2} = \sum_{j=0}^{\infty} 
 \left(\frac{4}{27} \right)^{j} \left[ \begin{matrix} \frac{1}{3}, \frac{5}{12}, \frac{2}{3}, \frac{11}{12} \vspace{1mm} \\ 1, \frac{10}{9}, 
 \frac{13}{9}, \frac{16}{9} \end{matrix} \right]_{j} \left( 621 j^3+828 j^2+303 j+26 \right). $$ 
\end{example}

\begin{example}
 Setting $(a, b, c, n) = 
 \big( \frac{2}{3}, \frac{1}{6}, -\frac{2}{3}, 1 \big)$, we obtain 
 $$ 81 \sqrt[3]{2} = 
 \sum_{j=0}^{\infty} \left(\frac{4}{27} \right)^{j} \left[ \begin{matrix} 
 \frac{1}{3}, \frac{1}{3}, \frac{7}{12}, \frac{5}{6}, \frac{13}{12} \vspace{1mm} \\ 
 1, \frac{7}{6}, \frac{7}{6}, \frac{3}{2}, \frac{5}{3}
 \end{matrix} \right]_{j} \left( 1242 j^3+1728 j^2+738 j+91 \right). $$ 
\end{example}

\begin{example}
 Setting $(a, b, c, n) = \big( \frac{1}{2}, \frac{1}{2}, -\frac{1}{2}, 1 \big)$, we obtain the new and motivating $\pi$ formula 
 highlighted in \eqref{PiMotivating6}. 
\end{example}

\subsection{Series of convergence rate $\frac{16}{27}$}
 Setting 
\begin{equation}\label{Ffor1627}
 F(n, k) := \binom{n}{k} \frac{ (1)_{k} }{ (3n+b)_{k + a}} 
\end{equation}
 and applying Zeilberger's algorithm, this again produces a first-order recurrence, for the $r = 1$ case of \eqref{firstorder}, with \eqref{desiredvanishing} 
 again vanishing. So, by plugging \eqref{Ffor1627} and its associated $g_{1}$- and $g_{2}$-functions into \eqref{mainacceleration}, the 
 hypergeometric transform we obtain provides an acceleration of the rate of convergence. Specifying rational values for the free parameters $a$, $b$, 
 and $n$, as below, produces the corresponding closed forms shown below. 

\begin{example}
 Setting $(a, b, n) = \big( -1, \frac{1}{2}, \frac{1}{2} \big)$, we obtain $$ 384 \sqrt{2} = \sum_{j=0}^{\infty} \left( \frac{16}{27} \right)^{j} \left[ 
 \begin{matrix} \frac{3}{8}, \frac{5}{8}, \frac{7}{8}, \frac{9}{8} \vspace{1mm} \\ 1, \frac{4}{3}, \frac{5}{3}, \frac{5}{2} \end{matrix} \right]_{j} 
 \left( 704 j^3+2096 j^2+1564 j+321 \right). $$ 
\end{example}

\begin{example}
 Setting $(a, b, n) = \big( -\frac{1}{3}, \frac{1}{3}, \frac{1}{3} \big)$, we obtain $$ 144 \sqrt[3]{2} = \sum_{j=0}^{\infty} \left( \frac{16}{27} 
 \right)^{j} \left[ \begin{matrix} \frac{7}{12}, \frac{5}{6}, \frac{13}{12} \vspace{1mm} \\ 1, \frac{7}{3}, \frac{5}{3} \end{matrix} \right]_{j} \left( 
 99 j^2+237 j+113 \right). $$ 
\end{example}
 
 We may also recover results from Chu and Zhang's paper \cite{ChuZhang2014}, as in the $(a, b, n) = \big( \frac{1}{2}, \frac{1}{2}, \frac{1}{2} \big)$ 
 case, which produces Example 47 from \cite{ChuZhang2014}. 

 By mimicking the derivation of our hypergeometric transform obtained from \eqref{Ffor1627}, but with $ \binom{n}{k} \frac{ (2)_{k} }{ (3n+b,a+k) } $ 
 used in place of \eqref{Ffor1627}, and by specifying $a$, $b$, and $n$ as below, this results in the correpsonding closed forms 
 in Examples \ref{192sqrt2}--\ref{14cuberoot}. 

\begin{example}\label{192sqrt2}
 Setting $(a, b, n) = \big( \frac{1}{4}, \frac{1}{4}, \frac{1}{2} \big)$, we obtain $$ 192 \sqrt{2} = \sum_{j=0}^{\infty} \left( \frac{16}{27} \right)^{j} 
 \left[ \begin{matrix} -\frac{3}{4}, \frac{5}{8}, \frac{7}{8}, \frac{9}{8}, \frac{11}{8} \vspace{1mm} \\ 1, \frac{5}{4}, \frac{4}{3}, \frac{5}{3}, 
 \frac{5}{2} \end{matrix} \right]_{j} \left( 704 j^3+2416 j^2+2252 j+633 \right). $$ 
\end{example}

\begin{example}\label{14cuberoot}
 Setting $(a, b, n) = \big( -\frac{1}{3}, \frac{1}{3}, \frac{1}{3} \big)$, we obtain $$ -14 \sqrt[3]{2} = \sum_{j=0}^{\infty} \left( \frac{16}{27} 
 \right)^{j} \left[ \begin{matrix} -\frac{7}{6}, \frac{7}{12}, \frac{13}{12} \vspace{1mm} \\ \frac{1}{3}, \frac{2}{3}, 1 \end{matrix} \right]_{j} 
 \frac{66 j+133}{3 j+4}. $$ 
\end{example}

\subsection{Series of convergence rate $\frac{1}{64}$}
 Setting $ F(n, k) := \frac{(a)_k (2 n)_k}{(3 n)_{b+k} (3 n)_{c+k}} $ and mimicking the acceleration method applied in the preceding subsections, we 
 may obtain the following results, by specifying the parameters $a$, $b$, $c$, and $n$ as indicated. 

\begin{example}
 Setting $(a, b, c, n) = \big( \frac{1}{2}, \frac{1}{4}, \frac{3}{4}, \frac{1}{4} \big)$, we obtain $$ \frac{105 \pi }{4} = 
 \sum_{j=0}^{\infty} \left(\frac{1}{64} \right)^{j} \left[ \begin{matrix} 
 \frac{1}{4}, \frac{1}{2}, \frac{3}{4}, 1 \vspace{1mm} \\ 
 \frac{9}{8}, \frac{11}{8}, \frac{13}{8}, \frac{15}{8} 
 \end{matrix} \right]_{j} (3 j+2) \left(112 j^2+144 j+41\right). $$ 
\end{example}

\begin{example}
 Setting $(a, b, c, n) = \big( \frac{1}{2}, \frac{3}{4}, \frac{3}{4}, \frac{1}{4} \big)$, we obtain 
 $$ \frac{75 \pi ^2}{16} = 
 \sum_{j=0}^{\infty} \left(\frac{1}{64} \right)^{j} \left[ \begin{matrix} 
 \frac{1}{3}, \frac{1}{3}, 1, 1, 1, \frac{5}{3}, \frac{5}{3} \vspace{1mm} \\ 
 \frac{7}{6}, \frac{7}{6}, \frac{3}{2}, \frac{3}{2}, \frac{3}{2}, \frac{11}{6}, \frac{11}{6} 
 \end{matrix} \right]_{j} (3 j+2) \left(63 j^2+81 j+23\right). $$ 
\end{example}

\begin{example}
 Setting $(a, b, c, n) = \big( 1, \frac{1}{2}, \frac{1}{2}, \frac{1}{2} \big)$, we obtain the new and motivating result highlighted in \eqref{PiMotivating1}. 
\end{example}

 Similarly, by setting $ F(n, k) := \frac{ (n)_{k} (n)_{k+c} }{ (2n)_{k + b} (2n)_{k+a} } $ and again mimicking the acceleration methods given above, 
 we may obtain the following, according to the specified parameter values. 

\begin{example}
 Setting $(a, b, c, n) = \big( -\frac{5}{6}, 0, -\frac{2}{3}, 1 \big)$, we obtain $$ \frac{385 \pi }{12 \sqrt{3}} = 
 \sum_{j=0}^{\infty} \left(\frac{1}{64} \right)^{j} \left[ \begin{matrix} 
 \frac{1}{6}, \frac{1}{3}, \frac{5}{6}, 1, \frac{5}{3} \vspace{1mm} \\ 
 \frac{13}{12}, \frac{17}{12}, \frac{3}{2}, \frac{19}{12}, \frac{23}{12} 
 \end{matrix} \right]_{j} (3 j+2) \left(84 j^2+108 j+29\right). $$ 
\end{example}

\begin{example}
 Setting $(a, b, c, n) = \big( -\frac{3}{4}, \frac{1}{2}, -1, \frac{1}{4} \big)$, we obtain 
 $$ -10 \sqrt{2} = 
 \sum_{j=0}^{\infty} \left(\frac{1}{64} \right)^{j} \left[ \begin{matrix} 
 -\frac{3}{4}, -\frac{1}{2}, \frac{1}{4}, \frac{3}{4}, \frac{7}{4} \vspace{1mm} \\ 
 \frac{3}{8}, \frac{7}{8}, 1, \frac{9}{8}, \frac{13}{8} 
 \end{matrix} \right]_{j} \left( 672 j^3+304 j^2-70 j-17 \right). $$ 
\end{example}
 
\begin{example}
 Setting $(a, b, c, n) = \big( -\frac{2}{3}, -\frac{1}{6}, -\frac{1}{2}, \frac{5}{6} \big)$, we obtain 
 $$ 162 \sqrt[3]{2} = 
 \sum_{j=0}^{\infty} \left(\frac{1}{64} \right)^{j} \left[ \begin{matrix} 
 \frac{1}{6}, \frac{1}{3}, \frac{2}{3}, \frac{5}{6} \vspace{1mm} \\ 
 1, \frac{5}{4}, \frac{3}{2}, \frac{7}{4} 
 \end{matrix} \right]_{j} \left( 2268 j^3+3420 j^2+1557 j+203 \right). $$ 
\end{example}
 
\begin{example}
 Setting $(a, b, c, n) = \big( -\frac{2}{3}, -\frac{1}{3}, -1, \frac{5}{6} \big)$, we obtain the formula of Ramanujan type 
 highlighted in \eqref{RamanujanType8}. 
\end{example}

\begin{example}
 Setting $(a, b, c, n) = \big( \frac{1}{2}, \frac{1}{2}, 0, \frac{1}{2} \big)$, we obtain 
 the new and motivating $\pi$ formula highlighted in \eqref{PiMotivating9}. 
\end{example}

 Setting $(a, b, c, n)$ as the tuples $\big( 0, 0, -1, \frac{1}{2} \big)$, $(0, 0, 0, 1)$, and $\big( -\frac{2}{3}, 0, -\frac{1}{3}, 1 \big)$, we recover 
 results from Chu and Zhang's accelerations of Dougall's sum, including Ramanujan's series of convegence rate $\frac{1}{64}$, 
 and including a series previously given by Guillera \cite{Guillera2008}. 

 We may similarly recover many of the results from Chu's work on Gould--Hsu inverse series relations \cite{Chu2021Ramanujan}, 
 by setting $(a, b, c, n)$ as $\big( \frac{1}{2}, 0, 0, \frac{1}{2} \big)$, 
 $\big( -\frac{2}{3}, \frac{1}{6}, -1, \frac{5}{6} \big)$, 
 $\big( -\frac{2}{3}, \frac{1}{3}, -\frac{2}{3}, \frac{5}{6} \big)$, 
 $\big( \frac{1}{3}, \frac{1}{3}, -\frac{2}{3}, \frac{1}{3} \big)$, 
 $\big( -\frac{1}{2}, \frac{1}{4}, -1, \frac{3}{4} \big)$, and 
 $\big( -\frac{1}{2}, \frac{1}{2}, -\frac{1}{2}, \frac{3}{4} \big)$, 
 with these different parameter combinations yielding distinct results from \cite{Chu2021Ramanujan}. 
 
\subsection{Series of convergence rate $\frac{27}{64}$}
 Finally, we set $$ \frac{(n)_k (2 n)_k}{(4 n)_{b+k} (a+n)_{c+k}} $$ to obtain series of convergence rat e $\frac{27}{64}$, 
 by mimicking the acceleration methods given above, 
 and by setting the parameters $a$, $b$, $c$, and $n$ as below. 

\begin{example}
 Setting $(a, b, c, n) = \big( \frac{3}{8}, 0, \frac{3}{8}, \frac{1}{2} \big)$, we obtain 
 $$ \frac{189 \pi }{4} = 
 \sum_{j=0}^{\infty} \left(\frac{27}{64} \right)^{j} \left[ \begin{matrix} 
 \frac{1}{2}, \frac{1}{2}, \frac{1}{2}, \frac{5}{6}, 1, \frac{7}{6} \vspace{1mm} \\ 
 \frac{5}{4}, \frac{11}{8}, \frac{7}{4}, \frac{7}{4}, \frac{15}{8} 
 \end{matrix} \right]_{j} \text{{\footnotesize $ \left( 592 j^4+1656 j^3+1700 j^2+753 j+120 \right)$}}. $$
\end{example}

\begin{example}
 Setting $(a, b, c, n) = \big( 1, 0, 0, \frac{1}{2} \big)$, we obtain $$ 192 \log (2) = 
 \sum_{j=0}^{\infty} \left(\frac{27}{64} \right)^{j} \left[ \begin{matrix} 
 \frac{5}{6}, \frac{7}{6} \vspace{1mm} \\ 
 \frac{5}{4}, \frac{7}{4} 
 \end{matrix} \right]_{j} \frac{148 j^2+256 j+111}{(j+1) (2 j+1)}. $$ 
\end{example}
 
\begin{example}
 Setting $(a, b, c, n) = \big( 0, 0, 1, 1 \big)$, we obtain $$ 30 = 
 \sum_{j=0}^{\infty} \left(\frac{27}{64} \right)^{j} \left[ \begin{matrix} 
 1, \frac{4}{3}, \frac{5}{3} \vspace{1mm} \\ 
 \frac{7}{4}, 2, \frac{9}{4} 
 \end{matrix} \right]_{j} \frac{37 j^2+101 j+69}{2 j+3} $$ 
\end{example}

 Setting $(a, b, c, n) = \big( \frac{1}{4}, 0, \frac{1}{4}, 1 \big)$
 also allows us to recover 
 Example 51 from Chu and Zhang's accelerations from Dougall's sum \cite{ChuZhang2014}. 

\section{Further rates of acceleration}
 The convergence rates involved above agree with the convergence rates in Chu and Zhang's paper \cite{ChuZhang2014}. 
 We briefly conclude by emphasizing the versatility of our method, 
 by noting how it can be further applied to 
 obtain natural rates of convergence not considered in \cite{ChuZhang2014}. 
 For example, setting 
 $$ F(n, k) := \frac{(a)_k (n)_k}{(3 n)_{b+k} (2 n)_{c+k}}, $$
 we can obtain 
 $$ \frac{8800 \pi }{21 \sqrt{3}} = 
 \sum_{j=0}^{\infty} \left( -\frac{1}{64} \right)^{j} \left[ \begin{matrix} 
 1, \frac{10}{9}, \frac{13}{9}, \frac{3}{2}, \frac{5}{3}, \frac{16}{9} \vspace{1mm} \\ 
 \frac{7}{6}, \frac{14}{9}, \frac{7}{4}, \frac{17}{9}, \frac{20}{9}, \frac{9}{4} 
 \end{matrix} \right]_{j} \left( 585 j^3+1956 j^2+2155 j+780 \right). $$ 
 by setting $(a, b, c, n) = \big( -\frac{2}{3}, \frac{2}{3}, 0, \frac{2}{3} \big)$. 
 Similarly, setting 
 $$ F(n, k) := \binom{n}{k} \frac{ (1)_{k} }{ (2 k + b)_{k + a}}, $$ we can obtain 
 $$ \frac{45 \pi }{4} = 
 \sum_{j=0}^{\infty} \left(\frac{27}{32} \right)^{j} \left[ \begin{matrix} 
 1, \frac{4}{3}, \frac{5}{3} \vspace{1mm} \\ 
 \frac{7}{4}, \frac{9}{4}, \frac{5}{2}
 \end{matrix} \right]_{j} \left( 5 j^2+16 j+12 \right) $$ 
 by setting $(a, b, c, n) = \big( 1, \frac{1}{2}, \frac{1}{2} \big)$. 

\subsection*{Acknowledgements}
 The author was supported through a Killam Postdoctoral Fellowship from the Killam Trusts. 
 The author wants to thank Paul Levrie and Karl Dilcher for useful and engaging discussions related to this paper.

\subsection*{Statements and Declarations}
  There are no    competing interests to   disclose.

\section{Appendix}\label{sectionAppendix}
 The rational certificate required in Section \ref{subsectionnegative27} 
 is as below. 

 \ 

\noindent $R(n, k) = \frac{1}{(b+k+2 n) (c+k+2 n)} 4 n (1 + 2 n)^2 (a b c - a^3 b c - b^2 c + 3 a^2 b^2 c - 3 a b^3 c + 
 b^4 c - b c^2 + 3 a^2 b c^2 - 6 a b^2 c^2 + 3 b^3 c^2 - 
 3 a b c^3 + 3 b^2 c^3 + b c^4 - a^2 b d + a^3 b d + a b^2 d - 
 2 a^2 b^2 d + a b^3 d - a^2 c d + a^3 c d + b c d + a b c d - 
 5 a^2 b c d + 7 a b^2 c d - 3 b^3 c d + a c^2 d - 2 a^2 c^2 d + 
 7 a b c^2 d - 5 b^2 c^2 d + a c^3 d - 3 b c^3 d + a^2 d^2 - 
 a^3 d^2 - a b d^2 + 2 a^2 b d^2 - a b^2 d^2 - a c d^2 + 
 2 a^2 c d^2 - 4 a b c d^2 + 2 b^2 c d^2 - a c^2 d^2 + 2 b c^2 d^2 +
 a b k - a^2 b k - b^2 k + a b^2 k + a^2 b^2 k - 2 a b^3 k + 
 b^4 k + a c k - a^2 c k - 2 b c k + a b c k + 4 a^2 b c k - 
 8 a b^2 c k + 4 b^3 c k - c^2 k + a c^2 k + a^2 c^2 k - 
 8 a b c^2 k + 7 b^2 c^2 k - 2 a c^3 k + 4 b c^3 k + c^4 k + b d k -
 a b d k - 2 a^2 b d k + b^2 d k + 4 a b^2 d k - 2 b^3 d k + 
 c d k - a c d k - 2 a^2 c d k + b c d k + 10 a b c d k - 
 8 b^2 c d k + c^2 d k + 4 a c^2 d k - 8 b c^2 d k - 2 c^3 d k + 
 a^2 d^2 k - b d^2 k - 2 a b d^2 k + b^2 d^2 k - c d^2 k - 
 2 a c d^2 k + 4 b c d^2 k + c^2 d^2 k + a k^2 - a^2 k^2 - b k^2 + 
 2 a^2 b k^2 + b^2 k^2 - 4 a b^2 k^2 + 2 b^3 k^2 - c k^2 + 
 2 a^2 c k^2 + b c k^2 - 7 a b c k^2 + 5 b^2 c k^2 + c^2 k^2 - 
 4 a c^2 k^2 + 5 b c^2 k^2 + 2 c^3 k^2 + d k^2 - a d k^2 - 
 a^2 d k^2 + 5 a b d k^2 - 4 b^2 d k^2 + 5 a c d k^2 - 
 7 b c d k^2 - 4 c^2 d k^2 - d^2 k^2 - a d^2 k^2 + 2 b d^2 k^2 + 
 2 c d^2 k^2 - a k^3 + a^2 k^3 + b k^3 - 2 a b k^3 + b^2 k^3 + 
 c k^3 - 2 a c k^3 + 2 b c k^3 + c^2 k^3 - d k^3 + 2 a d k^3 - 
 2 b d k^3 - 2 c d k^3 + d^2 k^3 + 2 a b n - a^2 b n - a^3 b n - 
 2 b^2 n + a b^2 n + 4 a^2 b^2 n - 5 a b^3 n + 2 b^4 n + 2 a c n - 
 a^2 c n - a^3 c n - 7 b c n + a b c n + 19 a^2 b c n - 
 35 a b^2 c n + 17 b^3 c n - 2 c^2 n + a c^2 n + 4 a^2 c^2 n - 
 35 a b c^2 n + 31 b^2 c^2 n - 5 a c^3 n + 17 b c^3 n + 2 c^4 n - 
 2 a^2 d n + 2 a^3 d n + 2 b d n + 4 a b d n - 14 a^2 b d n + 
 18 a b^2 d n - 6 b^3 d n + 2 c d n + 4 a c d n - 14 a^2 c d n + 
 48 a b c d n - 34 b^2 c d n + 18 a c^2 d n - 34 b c^2 d n - 
 6 c^3 d n - 4 a d^2 n + 8 a^2 d^2 n - 12 a b d^2 n + 4 b^2 d^2 n - 
 12 a c d^2 n + 16 b c d^2 n + 4 c^2 d^2 n + 4 a k n - 4 a^2 k n - 
 7 b k n + 5 a b k n + 10 a^2 b k n + b^2 k n - 24 a b^2 k n + 
 14 b^3 k n - 7 c k n + 5 a c k n + 10 a^2 c k n + b c k n - 
 54 a b c k n + 44 b^2 c k n + c^2 k n - 24 a c^2 k n + 
 44 b c^2 k n + 14 c^3 k n + 4 d k n - 4 a d k n - 6 a^2 d k n + 
 4 b d k n + 32 a b d k n - 26 b^2 d k n + 4 c d k n + 
 32 a c d k n - 56 b c d k n - 26 c^2 d k n - 4 d^2 k n - 
 8 a d^2 k n + 12 b d^2 k n + 12 c d^2 k n - 3 k^2 n - a k^2 n + 
 7 a^2 k^2 n + 6 b k^2 n - 25 a b k^2 n + 18 b^2 k^2 n + 
 6 c k^2 n - 25 a c k^2 n + 33 b c k^2 n + 18 c^2 k^2 n - 
 2 d k^2 n + 18 a d k^2 n - 26 b d k^2 n - 26 c d k^2 n + 
 8 d^2 k^2 n + 3 k^3 n - 6 a k^3 n + 6 b k^3 n + 6 c k^3 n - 
 6 d k^3 n + 4 a n^2 - 3 a^2 n^2 - a^3 n^2 - 10 b n^2 + 5 a b n^2 + 
 20 a^2 b n^2 - 41 a b^2 n^2 + 22 b^3 n^2 - 10 c n^2 + 5 a c n^2 + 
 20 a^2 c n^2 - 116 a b c n^2 + 96 b^2 c n^2 - 41 a c^2 n^2 + 
 96 b c^2 n^2 + 22 c^3 n^2 + 4 d n^2 + 4 a d n^2 - 20 a^2 d n^2 + 
 72 a b d n^2 - 48 b^2 d n^2 + 72 a c d n^2 - 120 b c d n^2 - 
 48 c^2 d n^2 - 24 a d^2 n^2 + 24 b d^2 n^2 + 24 c d^2 n^2 - 
 12 k n^2 + 8 a k n^2 + 17 a^2 k n^2 + 5 b k n^2 - 86 a b k n^2 + 
 73 b^2 k n^2 + 5 c k n^2 - 86 a c k n^2 + 148 b c k n^2 + 
 73 c^2 k n^2 + 4 d k n^2 + 56 a d k n^2 - 96 b d k n^2 - 
 96 c d k n^2 + 24 d^2 k n^2 + 11 k^2 n^2 - 41 a k^2 n^2 + 
 56 b k^2 n^2 + 56 c k^2 n^2 - 44 d k^2 n^2 + 9 k^3 n^2 - 12 n^3 + 
 8 a n^3 + 20 a^2 n^3 - 108 a b n^3 + 92 b^2 n^3 - 108 a c n^3 + 
 216 b c n^3 + 92 c^2 n^3 + 80 a d n^3 - 128 b d n^3 - 
 128 c d n^3 + 32 d^2 n^3 + 8 k n^3 - 96 a k n^3 + 164 b k n^3 + 
 164 c k n^3 - 112 d k n^3 + 60 k^2 n^3 - 88 a n^4 + 168 b n^4 + 
 168 c n^4 - 112 d n^4 + 136 k n^4 + 112 n^5).$

 \ 

\noindent The required polynomials for Theorem \ref{theoremnegative27} are as below. 

 \ 

\noindent $p_{1}(n) = -a^2 b c + 2 a^3 b c - a^4 b c + a b^2 c - 3 a^2 b^2 c + 2 a^3 b^2 c +
 a b^3 c - a^2 b^3 c + a b c^2 - 3 a^2 b c^2 + 2 a^3 b c^2 - 
 b^2 c^2 + 4 a b^2 c^2 - 4 a^2 b^2 c^2 - b^3 c^2 + 2 a b^3 c^2 + 
 a b c^3 - a^2 b c^3 - b^2 c^3 + 2 a b^2 c^3 - b^3 c^3 + a^2 b d - 
 2 a^3 b d + a^4 b d - a b^2 d + 3 a^2 b^2 d - 2 a^3 b^2 d - a b^3 d +
 a^2 b^3 d + a^2 c d - 2 a^3 c d + a^4 c d - 2 a b c d + 
 6 a^2 b c d - 4 a^3 b c d + b^2 c d - 5 a b^2 c d + 5 a^2 b^2 c d + 
 b^3 c d - 2 a b^3 c d - a c^2 d + 3 a^2 c^2 d - 2 a^3 c^2 d + 
 b c^2 d - 5 a b c^2 d + 5 a^2 b c^2 d + 2 b^2 c^2 d - 4 a b^2 c^2 d +
 b^3 c^2 d - a c^3 d + a^2 c^3 d + b c^3 d - 2 a b c^3 d + 
 b^2 c^3 d - a^2 d^2 + 2 a^3 d^2 - a^4 d^2 + a b d^2 - 3 a^2 b d^2 + 
 2 a^3 b d^2 + a b^2 d^2 - a^2 b^2 d^2 + a c d^2 - 3 a^2 c d^2 + 
 2 a^3 c d^2 - b c d^2 + 4 a b c d^2 - 4 a^2 b c d^2 - b^2 c d^2 + 
 2 a b^2 c d^2 + a c^2 d^2 - a^2 c^2 d^2 - b c^2 d^2 + 2 a b c^2 d^2 -
 b^2 c^2 d^2 - a^2 b n + 2 a^3 b n - a^4 b n + a b^2 n - 
 3 a^2 b^2 n + 2 a^3 b^2 n + a b^3 n - a^2 b^3 n - a^2 c n + 
 2 a^3 c n - a^4 c n + 6 a b c n - 18 a^2 b c n + 12 a^3 b c n - 
 3 b^2 c n + 17 a b^2 c n - 17 a^2 b^2 c n - 3 b^3 c n + 6 a b^3 c n +
 a c^2 n - 3 a^2 c^2 n + 2 a^3 c^2 n - 3 b c^2 n + 17 a b c^2 n - 
 17 a^2 b c^2 n - 10 b^2 c^2 n + 20 a b^2 c^2 n - 5 b^3 c^2 n + 
 a c^3 n - a^2 c^3 n - 3 b c^3 n + 6 a b c^3 n - 5 b^2 c^3 n + 
 2 a^2 d n - 4 a^3 d n + 2 a^4 d n - 6 a b d n + 18 a^2 b d n - 
 12 a^3 b d n + 2 b^2 d n - 14 a b^2 d n + 14 a^2 b^2 d n + 
 2 b^3 d n - 4 a b^3 d n - 6 a c d n + 18 a^2 c d n - 12 a^3 c d n + 
 6 b c d n - 32 a b c d n + 32 a^2 b c d n + 12 b^2 c d n - 
 24 a b^2 c d n + 4 b^3 c d n + 2 c^2 d n - 14 a c^2 d n + 
 14 a^2 c^2 d n + 12 b c^2 d n - 24 a b c^2 d n + 10 b^2 c^2 d n + 
 2 c^3 d n - 4 a c^3 d n + 4 b c^3 d n + 4 a d^2 n - 12 a^2 d^2 n + 
 8 a^3 d^2 n - 2 b d^2 n + 12 a b d^2 n - 12 a^2 b d^2 n - 
 2 b^2 d^2 n + 4 a b^2 d^2 n - 2 c d^2 n + 12 a c d^2 n - 
 12 a^2 c d^2 n - 8 b c d^2 n + 16 a b c d^2 n - 4 b^2 c d^2 n - 
 2 c^2 d^2 n + 4 a c^2 d^2 n - 4 b c^2 d^2 n - a^2 n^2 + 2 a^3 n^2 - 
 a^4 n^2 + 5 a b n^2 - 15 a^2 b n^2 + 10 a^3 b n^2 - 2 b^2 n^2 + 
 13 a b^2 n^2 - 13 a^2 b^2 n^2 - 2 b^3 n^2 + 4 a b^3 n^2 + 
 5 a c n^2 - 15 a^2 c n^2 + 10 a^3 c n^2 - 9 b c n^2 + 52 a b c n^2 - 
 52 a^2 b c n^2 - 23 b^2 c n^2 + 46 a b^2 c n^2 - 8 b^3 c n^2 - 
 2 c^2 n^2 + 13 a c^2 n^2 - 13 a^2 c^2 n^2 - 23 b c^2 n^2 + 
 46 a b c^2 n^2 - 25 b^2 c^2 n^2 - 2 c^3 n^2 + 4 a c^3 n^2 - 
 8 b c^3 n^2 - 8 a d n^2 + 24 a^2 d n^2 - 16 a^3 d n^2 + 8 b d n^2 - 
 48 a b d n^2 + 48 a^2 b d n^2 + 16 b^2 d n^2 - 32 a b^2 d n^2 + 
 4 b^3 d n^2 + 8 c d n^2 - 48 a c d n^2 + 48 a^2 c d n^2 + 
 40 b c d n^2 - 80 a b c d n^2 + 28 b^2 c d n^2 + 16 c^2 d n^2 - 
 32 a c^2 d n^2 + 28 b c^2 d n^2 + 4 c^3 d n^2 - 4 d^2 n^2 + 
 24 a d^2 n^2 - 24 a^2 d^2 n^2 - 12 b d^2 n^2 + 24 a b d^2 n^2 - 
 4 b^2 d^2 n^2 - 12 c d^2 n^2 + 24 a c d^2 n^2 - 16 b c d^2 n^2 - 
 4 c^2 d^2 n^2 + 4 a n^3 - 12 a^2 n^3 + 8 a^3 n^3 - 6 b n^3 + 
 36 a b n^3 - 36 a^2 b n^3 - 14 b^2 n^3 + 28 a b^2 n^3 - 4 b^3 n^3 - 
 6 c n^3 + 36 a c n^3 - 36 a^2 c n^3 - 48 b c n^3 + 96 a b c n^3 - 
 40 b^2 c n^3 - 14 c^2 n^3 + 28 a c^2 n^3 - 40 b c^2 n^3 - 
 4 c^3 n^3 + 8 d n^3 - 48 a d n^3 + 48 a^2 d n^3 + 40 b d n^3 - 
 80 a b d n^3 + 24 b^2 d n^3 + 40 c d n^3 - 80 a c d n^3 + 
 64 b c d n^3 + 24 c^2 d n^3 - 16 d^2 n^3 + 32 a d^2 n^3 - 
 16 b d^2 n^3 - 16 c d^2 n^3 - 4 n^4 + 24 a n^4 - 24 a^2 n^4 - 
 28 b n^4 + 56 a b n^4 - 20 b^2 n^4 - 28 c n^4 + 56 a c n^4 - 
 64 b c n^4 - 20 c^2 n^4 + 32 d n^4 - 64 a d n^4 + 48 b d n^4 + 
 48 c d n^4 - 16 d^2 n^4 - 16 n^5 + 32 a n^5 - 32 b n^5 - 32 c n^5 + 
 32 d n^5 - 16 n^6,$ 

 \ 

\noindent $p_{2}(n) = -4 a n + 4 a^3 n + 4 b n - 12 a^2 b n + 12 a b^2 n - 4 b^3 n + 
 4 c n - 12 a^2 c n + 24 a b c n - 12 b^2 c n + 12 a c^2 n - 
 12 b c^2 n - 4 c^3 n - 4 d n + 12 a^2 d n - 24 a b d n + 
 12 b^2 d n - 24 a c d n + 24 b c d n + 12 c^2 d n + 12 a d^2 n - 
 12 b d^2 n - 12 c d^2 n + 4 d^3 n + 12 n^2 - 16 a n^2 - 36 a^2 n^2 + 
 16 a^3 n^2 + 16 b n^2 + 72 a b n^2 - 48 a^2 b n^2 - 36 b^2 n^2 + 
 48 a b^2 n^2 - 16 b^3 n^2 + 16 c n^2 + 72 a c n^2 - 48 a^2 c n^2 - 
 72 b c n^2 + 96 a b c n^2 - 48 b^2 c n^2 - 36 c^2 n^2 + 
 48 a c^2 n^2 - 48 b c^2 n^2 - 16 c^3 n^2 - 16 d n^2 - 72 a d n^2 + 
 48 a^2 d n^2 + 72 b d n^2 - 96 a b d n^2 + 48 b^2 d n^2 + 
 72 c d n^2 - 96 a c d n^2 + 96 b c d n^2 + 48 c^2 d n^2 - 
 36 d^2 n^2 + 48 a d^2 n^2 - 48 b d^2 n^2 - 48 c d^2 n^2 + 
 16 d^3 n^2 + 48 n^3 + 92 a n^3 - 144 a^2 n^3 + 16 a^3 n^3 - 
 92 b n^3 + 288 a b n^3 - 48 a^2 b n^3 - 144 b^2 n^3 + 48 a b^2 n^3 - 
 16 b^3 n^3 - 92 c n^3 + 288 a c n^3 - 48 a^2 c n^3 - 288 b c n^3 + 
 96 a b c n^3 - 48 b^2 c n^3 - 144 c^2 n^3 + 48 a c^2 n^3 - 
 48 b c^2 n^3 - 16 c^3 n^3 + 92 d n^3 - 288 a d n^3 + 48 a^2 d n^3 + 
 288 b d n^3 - 96 a b d n^3 + 48 b^2 d n^3 + 288 c d n^3 - 
 96 a c d n^3 + 96 b c d n^3 + 48 c^2 d n^3 - 144 d^2 n^3 + 
 48 a d^2 n^3 - 48 b d^2 n^3 - 48 c d^2 n^3 + 16 d^3 n^3 - 60 n^4 + 
 432 a n^4 - 144 a^2 n^4 - 432 b n^4 + 288 a b n^4 - 144 b^2 n^4 - 
 432 c n^4 + 288 a c n^4 - 288 b c n^4 - 144 c^2 n^4 + 432 d n^4 - 
 288 a d n^4 + 288 b d n^4 + 288 c d n^4 - 144 d^2 n^4 - 432 n^5 + 
 432 a n^5 - 432 b n^5 - 432 c n^5 + 432 d n^5 - 432 n^6.$


\begin{thebibliography}{99}

\bibitem{AmdeberhanZeilberger1997}
 \textsc{T.\ Amdeberhan and D.\ Zeilberger}, 
 Hypergeometric series acceleration via the {WZ} method, 
 \emph{Electron. J. Combin.} {\bf 4(2)} (1997), Research Paper 3, 4. 

\bibitem{BorweinBorwein1987}
 \textsc{J.\ M.\ Borwein and P.\ B.\ Borwein}, 
 Pi and the AGM. Wiley, New York (1987). 

\bibitem{Campbell2023}
 \textsc{J.\ M.\ Campbell}, 
 Nested radicals obtained via the Wilf--Zeilberger method and related results, 
 \emph{Maple Trans.} {\bf 3(3)} (2023), Article 16011. 

\bibitem{CampbellLevrieunpublished}
 \textsc{J.\ M.\ Campbell and P.\ Levrie}, 
 On two-term hypergeometric recursions with free lower parameters, 
 \emph{J. Difference Equ. Appl.} (2024). 

\bibitem{Chu2023}
 \textsc{W.\ Chu}, 
 {{$\pi$}-formulas from dual series of the {D}ougall theorem}, 
 \emph{Ukrainian Math.\ J.} {\bf 74(12)} (2023), 1924--1949. 

\bibitem{Chu2018}
 \textsc{W.\ Chu}, 
 {$q$}-series reciprocities and further {$\pi$}-formulae, 
 \emph{Kodai Math. J.} {\bf 41(3)} (2018), 512--530. 

\bibitem{Chu2020Alternating}
 \textsc{W.\ Chu}, 
 Alternating series of {A}p\'{e}ry-type for the {R}iemann zeta function, 
 \emph{Contrib.\ Discrete Math.} {\bf 15(3)} (2020), 108--116. 

\bibitem{Chu2021Further}
 \textsc{W.\ Chu}, 
 Further {A}p\'{e}ry-like series for {R}iemann zeta function, 
 \emph{Math. Notes} {\bf 109(1-2)} (2021), 136--146. 

\bibitem{Chu2020Infinite}
 \textsc{W.\ Chu}, 
 Infinite series formulae related to {G}auss and {B}ailey {$_2F_1(\frac{1}{2})$}-sums, 
 \emph{Acta Math. Sci. Ser. B (Engl. Ed.)} {\bf 40(2)} (2020), 293--315. 

\bibitem{Chu2021Infinite}
 \textsc{W.\ Chu}, 
 Infinite series identities derived from the very well-poised {$\Omega$}-sum, 
 \emph{Ramanujan J.} {\bf 55(1)} (2021), 239--270. 

\bibitem{Chu2021Ramanujan}
 \textsc{W.\ Chu}, 
 Ramanujan-like formulae for {$\pi$} and {$1/\pi$} via Gould-Hsu inverse series relations, 
 \emph{Ramanujan J.} {\bf 56(3)} (2021), 1007--1027. 

\bibitem{Chu2019}
 \textsc{W.\ Chu}, 
 Summation formulae for twisted cubic {$q$}-series, 
 \emph{Math.\ Methods Appl. Sci.} {\bf 42(6)} (2019), 1831--1843. 

\bibitem{ChuZhang2014}
 \textsc{W.\ Chu and W.\ Zhang}, 
 Accelerating {D}ougall's {${}_5F_4$}-sum and infinite series involving {$\pi$}, 
 \emph{Math.\ Comp.} {\bf 83(285)} (2014), 475--512. 

\bibitem{ChudnovskyChudnovsky1988}
 \textsc{D.\ V.\ Chudnovsky and G.\ V.\ Chudnovsky}, 
 Approximations and complex multiplication according to Ramanujan. 
 In: G.\ E.\ Andrews, B.\ C. Berndt, and R.\ A.\ Rankin (eds.) ``Ramanujan Revisited'': 
 Proceedings of the Centenary Conference (Urbana–Champaign, 1987), pp. 375--472. 
 Academic Press, Boston (1988) 

\bibitem{Guillera2018Dougall}
 \textsc{J.\ Guillera}, 
 Dougall's {$_5F_4$} sum and the {WZ} algorithm, 
 \emph{Ramanujan J.} {\bf 46(3)} (2018), 667--675. 

\bibitem{Guillera2006}
 \textsc{J.\ Guillera}, 
 Generators of some {R}amanujan formulas, 
 \emph{Ramanujan J.} {\bf 11(1)} (2006), 41--48. 

\bibitem{Guillera2008}
 \textsc{J.\ Guillera}, 
 Hypergeometric identities for 10 extended {R}amanujan-type series, 
 \emph{Ramanujan J.} {\bf 15(2)} (2008), 219--234. 

\bibitem{Guillera2013}
 \textsc{J.\ Guillera}, 
 More hypergeometric identities related to {R}amanujan-type series, 
 \emph{Ramanujan J.} {\bf 32(1)} (2013), 5--22. 

\bibitem{Guillera2011}
 \textsc{J.\ Guillera}, 
 A new {R}amanujan-like series for {$1/\pi^2$}, 
 \emph{Ramanujan J.} {\bf 26(3)} (2011), 369--374. 

\bibitem{Guillera2010}
 \textsc{J.\ Guillera}, 
 On {WZ}-pairs which prove {R}amanujan series, 
 \emph{Ramanujan J.} {\bf 22(3)} (2010), 249--259. 

\bibitem{Guillera2018Proofs}
 \textsc{J.\ Guillera}, 
 Proofs of some {R}amanujan series for {$1/\pi$} using a program due to {Z}eilberger, 
 \emph{J.\ Difference Equ. Appl.} {\bf 24(10)} (2018), 1643--1648. 

\bibitem{Guillera2002}
 \textsc{J.\ Guillera}, 
 Some binomial series obtained by the {WZ}-method, 
 \emph{Adv.\ in Appl.\ Math.} {\bf 29(4)} (2002), 599--603. 

\bibitem{HessamiPilehroodHessamiPilehrood2008}
 \textsc{Kh.\ Hessami Pilehrood and T.\ Hessami Pilehrood}, 
 Simultaneous generation for zeta values by the {M}arkov-{WZ} method, 
 \emph{Discrete Math. Theor. Comput. Sci.} {\bf 10(3)} (2008), 115--123. 

\bibitem{Mohammed2005}
 \textsc{M.\ Mohammed}, 
 Infinite families of accelerated series for some classical constants by the {M}arkov-{WZ} method, 
 \emph{Discrete Math. Theor. Comput. Sci.} {\bf 7(1)} (2005), 11--23. 

\bibitem{PetkovsekWilfZeilberger1996}
 \textsc{M.\ Petkov\v{s}ek, H.\ S.\ Wilf, and D.\ Zeilberger}, 
 {{$A=B$}}. A K Peters, Ltd., Wellesley, MA (1996). 

\bibitem{Rainville1960}
 \textsc{E.\ D.\ Rainville}, 
 Special Functions. 
 The Macmillan Co., New York (1960). 

\bibitem{Ramanujan1914}
 \textsc{S.\ Ramanujan}, 
 Modular equations and approximations to {{\(\pi\)}}, 
 \emph{Quart. J.} {\bf 45} (1914), 350--372. 

\bibitem{Wilf1999}
 \textsc{H.\ S.\ Wilf}, 
 Accelerated series for universal constants, by the {WZ} method, 
 \emph{Discrete Math.\ Theor.\ Comput. Sci.} {\bf 3(4)} (1999), 189--192. 

\end{thebibliography}
\end{document}